\documentclass[12pt]{amsart}

\usepackage{amsmath,amssymb,amsthm}
\usepackage{hyperref}
\usepackage{mathtools}
\newtheorem{theorem}{Theorem}[section]
\newtheorem{lemma}[theorem]{Lemma}
\newtheorem{proposition}[theorem]{Proposition}
\newtheorem{corollary}[theorem]{Corollary}

\theoremstyle{definition}
\newtheorem{definition}[theorem]{Definition}
\newtheorem{remark}[theorem]{Remark}
\newtheorem{example}{Example}%

\title[Shadowing property and transitivity of a set-valued map and its inverse limit]{Shadowing property and transitivity of a set-valued map and its inverse limit}

\author[Yingcui Zhao]{Yingcui Zhao}
\address{School of Computer Science and Technology, Dongguan University of Technology, Dongguan, China}
\email{zycchaos@126.com}

\author[Lidong Wang]{Lidong Wang$^\ast$}
\address{School of Data Science, Zhuhai College of Science and Technology, zhuhai, China}
\email{ wld0707@126.com (Corresponding author)}

\date{}

\begin{document}

\begin{abstract}
We study the properties of shadowing, transitivity, weakly mixing, mixing, chain transitivity and chain mixing of a set-valued map and its generalized inverse limit. Concerning shadowing, we prove that for a surjective upper semi-continuous set-valued map $F$ on a compact metric space, $F$ has shadowing if and only if the shift map on the generalized inverse limit $\underleftarrow{\lim}\,\underleftarrow{F}$ of its inverse set-valued map has shadowing; dually, $\underleftarrow{F}$ has shadowing if and only if the shift map on $\underleftarrow{\lim}F$ has shadowing. We further show that the shadowing of $F$ and that of $\underleftarrow{F}$ are always equivalent; consequently $F$, $\underleftarrow{F}$ and the shift maps on $\underleftarrow{\lim}\,\underleftarrow{F}$ and on $\underleftarrow{\lim}F$ all have shadowing simultaneously. In particular, $F$ has shadowing if and only if the shift map on its directly induced generalized inverse limit $\underleftarrow{\lim}F$ has shadowing. This strengthens a recent theorem established under continuity and openness assumptions. We show that if the shift map on the generalized inverse limit is transitive (resp. weakly mixing, mixing, chain transitive, chain mixing), then the set-valued map is transitive (resp. weakly mixing, mixing, chain transitive, chain mixing). For a set-valued map with shadowing, the properties of total transitivity, weak mixing, mixing, specification and chain mixing are mutually equivalent. 
\end{abstract}

\keywords{transitivity, mixing, shadowing property, set-valued maps, generalized inverse limits}

\maketitle

\section{Introduction}
A dynamical system $(X,f)$ is closely connected to the dynamical system of its inverse limit $(\underleftarrow{\lim}(X,f), \sigma_f)$. Inverse limits are a useful tool to study the dynamical properties of smooth systems (see \cite{Williams1967}). 
And some dynamical properties of $f$ can be interpreted as the topological structures of an inverse limits dynamical system\cite{Akin1997}. For discussions on the relationship between $(X,f)$ and its inverse limit dynamical system $(\underleftarrow{\lim}(X,f), \sigma_f)$ on various kinds of shadowing properties, mixing and chaos, see \cite{Liu2013}. It is worth mentioning that Wu et. al. \cite{Wu2014} investigated some chaotic properties via Furstenberg families generated for inverse limit dynamical systems. He and Liu \cite{He1999} proved that $(X, f)$ is transitive (resp. weakly mixing) if and only if $(\underleftarrow{\lim}(X,f), \sigma_f)$ is transitive (resp. weakly mixing). Liu and Zhao \cite{Liu2013} showed that $(x,f)$ is mixing if and only if $(\underleftarrow{\lim}(X,f), \sigma_f)$ is mixing.

In this paper we consider the dynamics of a set-valued map and its inverse limit spaces. Let $2^X$ be the hyperspace of nonempty closed subsets of $X$. 

\begin{definition}\label{def-cts-svm-all}
	Let $x\in X$.  The set-valued map $F$ is
	\begin{enumerate}
		\item[(1)] \emph{upper semi-continuous} (u.s.c.) at $x$ if for every open set $V\subset X$ with $F(x)\subset V$, there exists an open neighborhood $U$ of $x$ such that $F(t)\subset V$ for all $t\in U$.
		\item[(2)] \emph{lower semi-continuous} (l.s.c.) at $x$ if for every open set $V\subset X$ with $F(x)\cap V\neq\emptyset$, there exists an open neighborhood $U$ of $x$ such that $F(t)\cap V\neq\emptyset$ for all $t\in U$.
		\item[(3)] \emph{continuous} at $x$ if it is both u.s.c.\ and l.s.c.\ at $x$. We say $F$ is u.s.c./l.s.c./continuous on $X$ if it has the corresponding property at every point of $X$.
	\end{enumerate}
\end{definition}

Throughout this paper, let $X$ be a compact metric space with a metric $d$, $f:X\rightarrow X$ be a continuous map and $F:X\rightarrow 2^X$ be a set-valued map. Let $\mathbb{N}=\{0,1,2,\cdots\}$ and $\mathbb{Z^+}=\{1,2,3,\cdots\}$. We define $F^k:X\rightarrow 2^X$, $k\in\mathbb{Z}^+$, as: for any $x\in X$ and any $k\in\mathbb{Z}^+$, $F^k(x)=\bigcup_{y\in F^{k-1}(x)}F(y)$. And $\underleftarrow{F}:X\rightarrow 2^X$ is the \emph{inverse set-valud map} of $F$, i.e. the set-valued map on $X$ given by $\underleftarrow{F}(x)=\{y\in X:x\in F(y)\}$, so that $x\in F(y)\Leftrightarrow y\in\underleftarrow{F}(x)$. 

Generalized inverse limit, or the inverse limit with set-valued maps, was introduced in 2004 by Mahavier\cite{Mahavier2004}. The inverse limit space induced by $F$ is the space $$\underleftarrow{\lim} F=\{(x_0, x_1,\cdots)\in X^{\mathbb{N}}: x_{i}\in F(x_{i+1}), \forall~i\in\mathbb{N}\}$$ considered as a subspace of the Tychonoff product $X^{\mathbb{N}}$. Associated with the inverse limit spaces is a shift map $$\sigma(x_0,x_1,\cdots)=(x_1,x_2,\cdots).$$Suppose that the diameter of $X$ is equal to $1$. For each $n\in\mathbb{N}$, we define a metric $\rho$ on $X^{\mathbb{N}}$ by $$\rho(x,y)=\sup_{i\in\mathbb{N}}\frac{d(x_i,y_i)}{i+1}.$$

Generalized inverse limits provide a flexible framework for studying multi-valued functions, retaining richer topological and dynamical information than the classical single-valued inverse limit \cite{Banic2020,Aubin}. Unlike the single-valued theory, the construction loses no information under iteration \cite{Erceg2018}, which makes it well suited to tracking the long-term behaviour of a set-valued map.

Recently, the topic of generalized inverse limit is a greatly researched field of continuum theory \cite{Charatonik2021,Davies2021}. While most of the research has been on comprehending the topological structure of these spaces, some scholars have recently turned to studying the dynamical properties, for example specification property \cite{Raines2018}, topological entropy \cite{Erceg2018}, chaos \cite{zhao2023}, etc..

The shadowing property of a set-valued map and its inverse limit has attracted increasing attention. Luo, Nie and Yin \cite{Luo2020} studied shadowing and shadowable points for set-valued dynamical systems, Bernardes \cite{20232} characterized when an inverse limit inherits the shadowing property, and Raines and Tennant \cite{Raines2018} treated the dual specification property for a set-valued map and its inverse limit. Recently, Khan, Kumar and Das \cite{KhanKumarDas} proved that, for an onto \emph{continuous} set-valued map $F$ (and, for the full equivalence with the inverse set-valued map, an \emph{open} one), $F$ has shadowing if and only if the shift on a suitably induced inverse limit system has shadowing. Their argument reverses finite pseudo-orbits and therefore relies essentially on the continuity (equivalently, the uniform upper semi-continuity) of $F$. In the present paper we work with the one-sided inverse limit metric $\rho(x,y)=\sup_{i}d(x_i,y_i)/(i+1)$, whose discounting factor $1/(i+1)$ allows us to control the tail of an orbit by the diameter of $X$ alone. This lets us establish, for a \emph{merely upper semi-continuous} surjective set-valued map $F$, the equivalence between the shadowing of $F$ and the shadowing of the shift on $\underleftarrow{\lim}\,\underleftarrow{F}$ (Theorem \ref{thm2}--\ref{thm3} and Corollary \ref{cor1}), together with its dual form for $\underleftarrow{F}$ and $\underleftarrow{\lim}F$ (Corollary \ref{cor2}). Building on these, we prove that the shadowing of $F$ and that of $\underleftarrow{F}$ are \emph{always} equivalent, with no continuity or openness assumption (Theorem \ref{thm:equiv}), so that $F$, $\underleftarrow{F}$ and the shifts on $\underleftarrow{\lim}\,\underleftarrow{F}$ and $\underleftarrow{\lim}F$ all share the shadowing property (Corollary \ref{cor:all}). In particular this answers the question that motivated the present study: $F$ has shadowing if and only if the shift on its directly induced generalized inverse limit $\underleftarrow{\lim}F$ has shadowing (the equivalence $(1)\Leftrightarrow(4)$ of Corollary \ref{cor:all}). This strengthens the recent theorem of Khan, Kumar and Das \cite{KhanKumarDas}, which assumes continuity (and openness for the equivalence); the strengthening is possible because our argument reverses the \emph{witnesses} of the pseudo-orbit rather than the pseudo-orbit itself, using only the closedness of the graph of $F$. This continues our study of the dynamics of a set-valued map and its inverse limit, begun for transitivity, sensitivity and Devaney chaos in \cite{zhao2023}.

Inspired by the above works on the dynamical properties of inverse limit dynamical systems, this paper studies chaotic properties via shadowing, transitivity, weakly mixing, mixing, chain transitivity and chain mixing for inverse limits with set-valued maps.

The specific layout of the present paper is
as follows.  In Section 2, some preliminaries and definitions are
introduced. Section \ref{secsha} contains the principal contribution of the paper: the relationship between a set-valued map and its inverse limit concerning shadowing. There we establish the equivalences of Theorem \ref{thm2}--\ref{thm3} and Corollaries \ref{cor1}--\ref{cor:all}, the assumption-free equivalence of Theorem \ref{thm:equiv}, and the iterative invariance of shadowing for set-valued maps. In Section \ref{sectran}, we first prove that mixing and chain mixing are equivalent for set-valued maps with the shadowing property. Then we study for a set-valued map with shadowing, its  transitivity, weakly mixing, mixing, chain transitivity and chain mixing of a set-valued map and its inverse limit. Finally, the main conclusions of this work are summarized in Section \ref{seccon}.

\section{Preliminaries}\label{sec2}
For any $x\in X, A\subset X, \epsilon>0$, let $B(x,\epsilon)=\{y\in X: d(y,x)<\epsilon\}$ and $B(A,\epsilon)=\{y\in X: d(y, A)<\epsilon\}$. Note that $d(A, x)=\inf_{a\in A}d(a,x)$. A sequence $(x_i)_{i=0}^\infty$ is called an \emph{orbit of $f$}, if for each $i\in \mathbb{N}$, $x_{i+1}=f(x_i)$. A sequence $(x_i)_{i=0}^\infty$ is called a \emph{$\delta$-pseudo-orbit of $f$}, if for any $i\in \mathbb{N}$, $d(f(x_i),x_{i+1})<\delta$. For any $x,y\in X$, a \emph{$\delta$-chain of $f$ from $x$ to $y$ with length $n+1(n\in\mathbb{Z}^+)$} is a finite $\delta$-pseudo-orbit between these points, that is, a sequence $x_0=x,x_1,\cdots,x_{n}=y$ such that for any $i=0,1,\cdots,n-1$, $d(f(x_i),x_{i+1})<\delta$.

We say that $f$ has \emph{shadowing}, if for any $\epsilon>0$, there is $\delta>0$ such that every $\delta$-pseudo-orbit $(x_i)_{i=0}^\infty$ is $\epsilon$-shadowed by a point in $X$, that is, there is $z\in X$ such that for any $i\in \mathbb{N}$, $d(f^i(z),x_i)<\epsilon$.
For any nonempty open subsets $U,V\subset X$,  $N(U,V):=\{n\in\mathbb{N}:f^n(U)\cap V\neq\emptyset\}$.
we say that $f$ is
\begin{enumerate}
	\item \emph{(topologically) transitive}, if for any nonempty open sets $U,V\subset X$, $N(U,V)\neq\emptyset$.
	
	\item \emph{weakly mixing}, if for any nonempty open sets $U,V\subset X$, $N(U,V)$ contains arbitrarily long runs of positive integers, that is, there is a strictly increasing subsequence $\{n_i\}$ of $\mathbb{N}$ such that $\bigcup_{i=1}^\infty\{n_i,n_i+1,\cdots,n_i+i\}\subset N(U,V)$, (see \cite{Furstenberg1981} for details). 
	
	\item \emph{(topologically) mixing}, if for any nonempty open sets $U,V\subset X$, there exists $N\in\mathbb{N}$ such that for any positive integer $n\geq N$, $f^n(U)\cap V\neq\emptyset$.
	
	\item \emph{chain transitive}, if for any $x,y\in X$ and any $\epsilon>0$, there is an $\epsilon$-chain from $x$ to $y$.
	
	\item \emph{chain mixing}, if for any $x,y\in X$ and any $\epsilon>0$, there is $N\in\mathbb{N}$ such that for any positive integer $n\geq N$, there is an $\epsilon$-chain from $x$ to $y$ with length $n$.
\end{enumerate}

We begin with a few simple extensions of definitions from the single-valued case. Note that since $F$ is a set-valued map, orbits of $F$ are no longer uniquely determined by their initial condition.
\begin{definition}
	An \emph{orbit} of a point $x\in X$ {of $F$} is a sequence $(x_i)_{i=0}^\infty$ such that $x_{i+1}\in F(x_i)$ and $x_0=x$.
\end{definition}
\begin{definition}
	A sequence $(x_i)_{i=0}^\infty$ is called a \emph{$\delta$-pseudo-orbit} {of $F$} if $d(F(x_i),x_{i+1})<\delta$ for all $i\in \mathbb{N}$.
\end{definition}
For any $x,y\in X$, a \emph{$\delta$-chain of $F$ from $x$ to $y$ with length $n+1(n\in\mathbb{Z}^+)$} is a finite $\delta$-pseudo-orbit between these points, that is, a sequence $x_0=x,x_1,\cdots,x_{n}=y$ such that for any $i=0,1,\cdots,n-1$, $d(F(x_i),x_{i+1})<\delta$. 

\begin{definition}
	Let $F:X\rightarrow 2^X$ be a set-valued map. We say $F$ has \emph{shadowing}, if for any $\epsilon>0$, there exists $\delta>0$ such that any $\delta$-pseudo-orbit $(x_i)_{i=0}^\infty$ is $\epsilon$-shadowed by a point in $X$, that is, there is a point $z_0\in X$ with an orbit $(z_i)_{i=0}^\infty$ such that $d(z_i, x_i)<\epsilon$ for all $i\in\mathbb{N}$.
\end{definition}

The notions of transitivity and mixing for set-valued maps are introduced in \cite{kong}. Now we recall it and give the definitions of weakly mixing, chain transitivity and chain mixing {of} set-valued maps.

\begin{definition}
	The set-valued map $F$ is
	\begin{enumerate}
		\item \emph{(topologically) transitive}, if for any two nonempty open sets $U$ and $V$ in $X$, there is $n\in\mathbb{N}$ such that $F^n(U)\bigcap V\neq\emptyset$.
		
		\item \emph{chain transitive}, if for any two points $x,y\in X$ and for any $\delta>0$, there is a $\delta$-chain from $x$ to $y$.
		
		\item \emph{chain mixing}, if for any two points $x,y\in X$ and any $\delta>0$, there is $N\in\mathbb{N}$ such that for any integer $n\geq N$, there is a $\delta$-chain from $x$ to $y$ with length $n$.
		
		\item \emph{weakly mixing}, if for any nonempty open sets $U,V\subset X$, $N_F(U,V):=\{n\in\mathbb{N}:F^n(U)\bigcap V\neq\emptyset\}$ contains arbitrarily long runs of positive integers, that is, there is strictly increasing subsequence $\{n_i\}$ of $\mathbb{N}$ such that $\bigcup_{i=1}^\infty\{n_i,n_i+1,\cdots,n_i+i\}\subset N_F(U,V)$.
		
		\item \emph{(topologically) mixing}, if for any two nonempty open sets $U$ and $V$ in $X$, there is $M\in\mathbb{N}$ such that $\{n\in\mathbb{N}:n\geq M\}\subset N_F(U,V)$.
	\end{enumerate}
\end{definition}

\section{Shadowing of a set-valued map and its inverse limit}\label{secsha}
Shadowing is one of the most important notions in dynamical systems. 
It appears in many branches of modern theory of dynamical systems and often plays an important role in stability theory and ergodic theory \cite{20232}.
Inspired by \cite{Fakhari2010} we have the following conclusions.

\begin{lemma}\label{forth41}
	Suppose that $X$ is a compact metric space with metric $d$ and $F:X\rightarrow 2^X$ is an upper semi-continuous set-valued map. Let $\epsilon>0$ and $k\in\mathbb{Z^+}$. Then there exists $\delta>0$ such that any $\delta$-chain $(x_i)_{i=0}^k$ of $F$ with length $k+1$ can be $\epsilon$-shadowed by some orbit of $x_0$ of $F$.
\end{lemma}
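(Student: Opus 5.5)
The plan is to argue by contradiction with a compactness argument, using only that the graph $\mathrm{Gr}(F)=\{(x,y):y\in F(x)\}$ is closed in $X\times X$. This holds because $F$ is u.s.c.\ with closed values on a compact metric space. Fix $\epsilon>0$ and $k\in\mathbb{Z}^+$, and suppose that no $\delta$ works. Then for every $n$ there is a $\tfrac1n$-chain $(x^n_i)_{i=0}^k$ of $F$ that is not $\epsilon$-shadowed by any orbit of $x^n_0$. Since only finitely many coordinates are involved, compactness of $X^{k+1}$ lets us pass to a subsequence along which $x^n_i\to y_i$ for every $i=0,\dots,k$.

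The first step is to show that $(y_i)_{i=0}^k$ is an honest orbit segment of $F$. For each $i<k$ pick $w^n_{i+1}\in F(x^n_i)$ with $d(w^n_{i+1},x^n_{i+1})<\tfrac1n$. This is possible because $F(x^n_i)$ is compact, so the distance $d(F(x^n_i),x^n_{i+1})<\tfrac1n$ is attained. Then $(x^n_i,w^n_{i+1})\in\mathrm{Gr}(F)$ and $(x^n_i,w^n_{i+1})\to(y_i,y_{i+1})$. Closedness of the graph gives $y_{i+1}\in F(y_i)$. So $y_0,y_1,\dots,y_k$ is an orbit segment of $y_0$, and for all large $n$ it satisfies $d(y_i,x^n_i)<\epsilon$ for every $i\le k$.

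The second step, which I expect to be the \emph{main obstacle}, is to transfer this limit orbit to an orbit starting at $x^n_0$ itself, as the statement requires. The limit orbit is an orbit of $y_0$, not of $x^n_0$. To finish I would have to produce $z^n_1\in F(x^n_0)$ near $y_1$, then $z^n_2\in F(z^n_1)$ near $y_2$, and so on, for all large $n$. Upper semi-continuity does not supply this directly. It controls $F(x^n_0)$ from above, giving $F(x^n_0)\subset B(F(y_0),\eta)$, whereas the transfer needs points of $F(x^n_0)$ close to prescribed points of $F(y_0)$, which is a lower semi-continuity type condition.

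My first attempt at this obstacle is to avoid passing through $y_0$ at all and build the orbit of $x^n_0$ forward. Set $z_0=x^n_0$, and use the witnesses $w^n_1\in F(x^n_0)$ themselves as $z_1$, so that $d(z_1,x^n_1)<\tfrac1n$. Then I would try to propagate by induction on $i$, choosing $z_{i+1}\in F(z_i)$ within $\epsilon$ of $x^n_{i+1}$. The inductive step would use the closed graph together with a second compactness argument applied to the pairs $(z_i,x^n_i)$. Since $k$ is fixed, the errors can be distributed as $\epsilon/k$ per step, shrinking $\delta$ finitely many times. If this forward induction can only be closed under a lower semi-continuity hypothesis at the relevant points, I would record exactly where it enters. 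That is the step I would scrutinize first when checking the argument against the u.s.c.\ hypothesis alone.
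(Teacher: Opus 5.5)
\textbf{The gap is your Step 2, and it cannot be closed: the lemma is false for merely upper semi-continuous $F$.}

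Your Step 1 is correct, but it proves a weaker statement than the lemma. If you run the contradiction against the weaker conclusion (shadowing by \emph{some} orbit segment of $F$), it shows that every $\delta$-chain of length $k+1$ is $\epsilon$-shadowed by an orbit segment $(y_i)_{i=0}^k$ whose initial point is only $\epsilon$-close to $x_0$.

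Step 2 fails for the reason you suspected. Transferring to an orbit of $x_0$ itself requires, for $z$ near $x$, points of $F(z)$ near prescribed points of $F(x)$, i.e.\ $F(x)\subset B(F(z),\eta)$. Upper semi-continuity (closed graph) only gives the reverse inclusion $F(z)\subset B(F(x),\eta)$. A second compactness argument on the pairs $(z_i,x^n_i)$ cannot supply the missing direction.

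Here is a counterexample. On $X=[0,1]$ let $F(x)=\{x\}$ for $x\neq\tfrac12$ and $F(\tfrac12)=\{\tfrac12,1\}$. Its graph, the diagonal together with the point $(\tfrac12,1)$, is closed, so $F$ is u.s.c., and $F(X)=X$. Take $k=2$, $\epsilon=\tfrac12$ and any $0<\delta\leq1$. The sequence $x_0=\tfrac12-\tfrac{\delta}{2}$, $x_1=\tfrac12$, $x_2=1$ is a $\delta$-chain, since $d(F(x_0),x_1)=\tfrac{\delta}{2}$ and $d(F(x_1),x_2)=0$. But the only orbit of $x_0$ is constant, and $d(x_0,x_2)=\tfrac12+\tfrac{\delta}{2}>\epsilon$. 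Your forward induction breaks exactly at $i=1$: $z_1=x_0$ is $\tfrac{\delta}{2}$-close to $x_1$, yet $F(z_1)$ stays far from the witness $1\in F(x_1)$. The orbit $(\tfrac12,\tfrac12,1)$ does shadow the chain, as your Step 1 predicts.

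\textbf{The paper's proof does not overcome this obstacle; it hides it.} Its opening choice of $\delta_0$ asserts that for \emph{all} $x,t$, $d(t,x)<2\delta_0$ implies $F(t)\subset B(F(x),\epsilon)$. This condition is symmetric in $t$ and $x$, so it amounts to uniform continuity of $F$ in the Hausdorff metric. It fails for the map above: take $t=\tfrac12$ and $x$ slightly below $\tfrac12$. In step (b) this assertion is used precisely to obtain $F(x_1)\subset B(F(x_0^1),\delta_{k-1})$, which is the lower-semi-continuity-type transfer you flagged.

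So under u.s.c.\ alone, the available conclusion is your Step 1: shadowing by an orbit starting near $x_0$. If $F$ is also continuous, it is uniformly continuous into the hyperspace with the Hausdorff metric on the compact $X$, and your forward induction closes:
\begin{enumerate}
\item Choose $\epsilon=\gamma_k\geq\gamma_{k-1}\geq\cdots\geq\gamma_1>0$ so that, for $1\leq i<k$, $d(a,b)<\gamma_i$ implies $F(b)\subset B(F(a),\gamma_{i+1}/2)$.
\item Let $\delta=\gamma_1/2$ and set $z_0=x_0$.
\item At each step, pick $z_{i+1}\in F(z_i)$ within $\gamma_{i+1}/2$ of a point of $F(x_i)$ that lies within $\delta$ of $x_{i+1}$. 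This gives $d(z_{i+1},x_{i+1})<\gamma_{i+1}\leq\epsilon$.
\end{enumerate}
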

\begin{proof}
	Let $\epsilon>0$ and $k\in\mathbb{Z^+}$. Since $F$ is upper semi-continuous and $X$ is compact, there exists $0<2\delta_0<\epsilon$ such that for any $x\in X$, $d(t,x)<2\delta_0$ implies $d(F(x),t')<\epsilon,\forall t'\in F(t)$.
	\begin{itemize}
		\item[(1)]$i=1$. For $\delta_0>0$, there exists $0<2\delta_1<\delta_0$ such that for any $x\in X$, $d(t,x)<2\delta_1$ implies $d(F(x),t')<\delta_0,\forall t'\in F(t)$.
		\item[(2)]$i=m-1<k-1$. Suppose that for $\delta_{m-2}>0$, there exists $0<2\delta_{m-1}<\delta_{m-2}$ such that for any $x\in X$, $d(t,x)<2\delta_{m-1}$ implies $d(F(x),t')<\delta_{m-2},\forall t'\in F(t)$. Then for $\delta_{m-1}>0$, there exists $0<2\delta_{m}<\delta_{m-1}$ such that for any $x\in X$, $d(t,x)<2\delta_{m}$ implies $d(F(x),t')<\delta_{m-1},\forall t'\in F(t)$.		
	\end{itemize}
	By (1) and (2), there exists $\delta_1,\delta_2,\cdots,\delta_{k-1}>0$ with $0<2\delta_i<\delta_{i-1},\forall1\leq i\leq k-1$ such that for any $x\in X$, $d(t,x)<2\delta_{i}$ implies $d(F(x),t')<\delta_{i-1},\forall t'\in F(t),\forall1\leq i\leq k-1$. For $\delta_{k-1}>0$, there exists $0<2\delta_{k}<\min\{\delta_0,\delta_1,\cdots,\delta_{k-1}\}$ such that for any $x\in X$, $d(t,x)<2\delta_{k}$ implies $d(F(x),t')<\delta_{k-1},\forall t'\in F(t)$.
	\\
	Let $(x_i)_{i=0}^k$ be a $\delta$-chain of $F$.
	\begin{itemize}
		\item [(a)]$i=1$. Since $d(F(x_0),x_1)<\delta<2\delta_{k}$, there exists $x_0^1\in F(x_0)$ such that $d(x_0^1,x_1)<\delta<2\delta_{k}<\epsilon$.
		\item[(b)]$i=2$. By (a), $d(F(x_0^1),x_1)<\delta_{k-1},\forall x_1'\in F(x_1)$. Since $d(F(x_1),x_2)<\delta$, there exists $x_0^2\in F(x_0^1)$ such that $d(x_0^2,x_2)<\delta+\delta_{k-1}<2\delta_{k-1}<\epsilon$.
		\item[(c)]$i=m<k$. Suppose that there exists $x_0^m\in F(x_0^{m-1})$ such that $d(x_0^m,x_m)<2\delta_{k-(m-1)}<\epsilon$. Then $d(F(x_0^m),x'_m)<\delta_{k-m},\forall x_m'\in F(x_m)$. Since $d(F(x_m),x_{m+1})<\delta$, there exists $x_0^{m+1}\in F(x_0^m)$ such that $d(x_0^{m+1},x_{m+1})<\delta+\delta_{k-m}<2\delta_{k-m}<\epsilon$.
	\end{itemize}
	By (a),(b) and (c), there exists $(x_0^i)_{i=0}^k$ with $x_0^0=x_0$ and $x_0^{i+1}\in F(x_0^i),\forall 0\leq i<k$ such that $$d(x_0^i,x_i)<2\delta_{k-(i-1)}<\epsilon,\forall  0\leq i\leq k.$$
\end{proof}

\begin{theorem}\label{thm2}
	Suppose that $X$ is a compact metric space with metric $d$, the diameter of $X$ is equal to $1$, and $F:X\rightarrow 2^X$ is an upper semi-continuous set-valued map with $F(X)=X$. If $\sigma$ has shadowing on $\underleftarrow{\lim} \underleftarrow{F}$, then $F$ has shadowing.
\end{theorem}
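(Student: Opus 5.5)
The plan is to lift a pseudo-orbit of $F$ to a pseudo-orbit of $\sigma$ on $\underleftarrow{\lim}\,\underleftarrow{F}$, shadow it there, and read off the $0$-th coordinates. First I unwind the space. Since $x_i\in\underleftarrow{F}(x_{i+1})$ means exactly $x_{i+1}\in F(x_i)$, the points of $\underleftarrow{\lim}\,\underleftarrow{F}$ are precisely the forward orbits $(z_0,z_1,\dots)$ of $F$. Moreover $\sigma^i(z_0,z_1,\dots)$ has $0$-th coordinate $z_i$. So it suffices, given an $F$-pseudo-orbit, to produce a $\sigma$-pseudo-orbit whose $i$-th term has $0$-th coordinate $x_i$.

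Fix $\epsilon>0$ and let $\delta_\sigma>0$ be the constant given by the shadowing of $\sigma$ for $\epsilon$, with respect to $\rho$. Choose $K\in\mathbb{Z}^+$ with $1/(K+1)<\delta_\sigma$ and put $\eta=\delta_\sigma/3$. Apply Lemma \ref{forth41} with the constants $\eta$ and $K$ to obtain $\delta>0$. Now let $(x_i)_{i=0}^\infty$ be a $\delta$-pseudo-orbit of $F$. For each $i$, the block $x_i,x_{i+1},\dots,x_{i+K}$ is a $\delta$-chain of length $K+1$. By Lemma \ref{forth41} there is an $F$-orbit $x_i=w^i_0,w^i_1,\dots,w^i_K$ with $d(w^i_j,x_{i+j})<\eta$ for $0\le j\le K$. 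Since $F$ takes nonempty values, I extend this to an infinite $F$-orbit $(w^i_j)_{j\ge0}$ in an arbitrary way, and set $W_i=(w^i_0,w^i_1,\dots)\in\underleftarrow{\lim}\,\underleftarrow{F}$.

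The main step is to check that $(W_i)$ is a $\delta_\sigma$-pseudo-orbit of $\sigma$, and this is where the discount factor in $\rho$ does the work. For $j\ge K$, the $j$-th coordinates of $\sigma(W_i)$ and $W_{i+1}$ differ by at most $\operatorname{diam}X=1$. After dividing by $j+1\ge K+1$, this is less than $\delta_\sigma$. For $0\le j\le K-1$, the $j$-th coordinate of $\sigma(W_i)$ is $w^i_{j+1}$, which lies within $\eta$ of $x_{i+j+1}$. The $j$-th coordinate of $W_{i+1}$ is $w^{i+1}_j$, which also lies within $\eta$ of $x_{i+1+j}$. Hence their distance is less than $2\eta<\delta_\sigma$. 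Taking the supremum over $j$, I get $\rho(\sigma(W_i),W_{i+1})\le\max\{2\eta,1/(K+1)\}<\delta_\sigma$. The only delicate point is the strict inequality in the supremum. It is handled by the slack in the choice $\eta=\delta_\sigma/3$, and by noting that the tail terms are bounded by $1/(K+1)$, which is strictly less than $\delta_\sigma$.

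Shadowing of $\sigma$ then yields $Z=(z_0,z_1,\dots)\in\underleftarrow{\lim}\,\underleftarrow{F}$ with $\rho(\sigma^i Z,W_i)<\epsilon$ for all $i$. Comparing $0$-th coordinates gives $d(z_i,x_i)=d(z_i,w^i_0)\le\rho(\sigma^iZ,W_i)<\epsilon$. Since $Z$ is an orbit of $F$, this shows that $(x_i)$ is $\epsilon$-shadowed, so $F$ has shadowing. I expect the whole difficulty to sit in the pseudo-orbit estimate of the previous paragraph. Note also that this argument apparently uses only upper semi-continuity (through Lemma \ref{forth41}) and nonemptiness of the values of $F$; the hypothesis $F(X)=X$ seems not to be needed in this direction.
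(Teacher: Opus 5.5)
Your construction follows the paper's proof: lift each block $x_i,\dots,x_{i+K}$ by Lemma~\ref{forth41} to a finite $F$-orbit, extend it arbitrarily, let the factor $1/(j+1)$ absorb the uncontrolled tail, shadow in $\underleftarrow{\lim}\,\underleftarrow{F}$, and read off $0$-th coordinates. Your bound $d(w^i_{j+1},w^{i+1}_j)<2\eta$ through $x_{i+j+1}$ is more direct than the paper's $2^{j}\delta'$ recursion. You also correctly put the index $j=K$, where $w^i_{K+1}$ is uncontrolled, into the tail. The gap is the appeal to Lemma~\ref{forth41} itself, which is false for merely upper semi-continuous $F$. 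Its proof extracts from u.s.c.\ and compactness the uniform statement ``$d(t,x)<2\delta_0\Rightarrow F(t)\subset B(F(x),\epsilon)$''. That statement amounts to uniform Hausdorff continuity of $F$ and is not implied by upper semi-continuity. Concretely, on $X=[0,1]$ let $G(x)=\{x\}$ for $x\neq\frac12$ and $G(\frac12)=[0,1]$; its graph is closed and $G(X)=X$. For every $\delta\in(0,1)$ the sequence $\frac12-\frac{\delta}{2},\ \frac12,\ 1,\dots,1$ is a $\delta$-chain. However, the only orbit of $\frac12-\frac{\delta}{2}$ is constant, at distance greater than $\frac12$ from $x_2=1$. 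So the orbit you need, starting exactly at $x_i$, need not exist, and your proof as written (like the paper's) is incomplete.

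The theorem survives, and your argument is repaired by one change. Replace Lemma~\ref{forth41} by the version in which the shadowing orbit need only start near $x_0$: for $\eta>0$ and $K$ there is $\delta>0$ such that every $\delta$-chain $(x_0,\dots,x_K)$ admits an $F$-orbit $(u_0,\dots,u_K)$ with $d(u_j,x_j)<\eta$ for all $j$. This follows by compactness. If it failed for every $\delta=1/n$, pass to a subsequence of bad chains $(x^n_j)$ converging coordinatewise to some $(x_0,\dots,x_K)$. Choose $y^n_{j+1}\in F(x^n_j)$ with $d(y^n_{j+1},x^n_{j+1})<1/n$. Since the graph of $F$ is closed (u.s.c.\ with closed values), this gives $x_{j+1}\in F(x_j)$, and the limit orbit $\eta$-shadows the $n$-th chain for large $n$, a contradiction. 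With $W_i$ built from such orbits, your estimate $\rho(\sigma(W_i),W_{i+1})\le\max\{2\eta,1/(K+1)\}<\delta_\sigma$ is unchanged. The last step becomes $d(z_i,x_i)\le d(z_i,w^i_0)+d(w^i_0,x_i)<\epsilon+\eta$, so taking $\eta\le\epsilon$ gives $2\epsilon$-shadowing. Your observation that $F(X)=X$ is not used in this direction remains correct.
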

\begin{proof}
	Let $\epsilon>0$, $\delta>0$ be obtained for $\epsilon$ by the shadowing of $\sigma$ on $\underleftarrow{\lim} \underleftarrow{F}$ {and the diameter of $X$ is equal to $1$. Let $M\in\mathbb{Z^+}$ satisfy $\frac{1}{M+2}<\delta$ and $0<\delta'<\delta$ satisfy $0<\frac{2^{M+1}\delta'}{M+1}<\delta$. By Lemma \ref{forth41}, there exists $0<\eta<\delta'$ such that any $\eta$-chain $(x_i)_{i=0}^M$ of $F$ with length $M+1$ can be $\delta'$-shadowed by some orbit of $x_0$ of $F$.
		Let $(y_i)_{i=0}^\infty$ be a $\eta$-pseudo-orbit of $F$. Now we construct a $\delta$-pseudo-orbit $(x^i)_{i=0}^\infty$ of $\underleftarrow{\lim} \underleftarrow{F}$ using $(y_i)_{i=0}^\infty$, where for any $i\in\mathbb{N}$, $x^i=(x^i_0, x^i_1, x^i_2, \cdots).$ For any $i\geq0$,
		\begin{itemize}
			\item [(1)]let $x_0^i=y_i$.
			\item [(2)]let $(x^i_j)_{j=1}^M$ with $x_{j+1}^i\in F(x_j^i),\forall 0\leq j\leq M$ satisfy any $\eta$-chain  $(x_{0}^k)_{k=i}^{M+i}$ of $F$ with length $M+1$ can be $\delta'$-shadowed by $(x_j^i)_{j=0}^M$. Then $d(x_j^i,x_0^{i+j})<\delta',\forall 0\leq j\leq M$.
			\item[(3)]for any $j>M$, let $x_j^i\in F(x_{j-1}^i)$. 
		\end{itemize}
		Since for any $i\geq 0$, $$\rho(\sigma(x^i),x^{i+1})=\sup_{j\in\mathbb{N}}\frac{d(x_{j+1}^i,x_j^{i+1})}{j+1}=\max\{\sup_{0\leq j\leq M}\frac{d(x_{j+1}^i,x_j^{i+1})}{j+1},\frac{1}{M+2}\}$$
		and $\frac{1}{M+2}<\delta$, in order to prove $(x^i)_{i=0}^\infty$ is a $\delta$-pseudo-orbit of $\sigma$, we only need to prove for any $i\geq 0$, $\sup_{0\leq j\leq M}\frac{d(x_{j+1}^i,x_j^{i+1})}{j+1}<\delta$. That is to prove for any $i\geq 0$ and any $0\leq j\leq M$, $\frac{d(x_{j+1}^i,x_j^{i+1})}{j+1}<\delta$.}
	
	{Let $(a_n)_{n=1}^\infty\subset\mathbb{N}$ satisfy $a_0=0$ and $a_n=1+a_1+a_2+\cdots a_{n-1},\forall n\geq1$. Let $i\geq0$.
		\begin{itemize}
			\item [(1)]$j=0$. $d(x_1^i,x_0^{i+1})<\delta'=a_1\delta'$.
			\item[(2)]$j=1$. $d(x_2^i,x_1^{i+1})<d(x_2^i,x_0^{i+2})+d(x_0^{i+2},x_1^{i+1})<a_1\delta'+\delta'=a_2\delta'$.
			\item[(3)]Suppose that when $j=m-1<M-1$, $d(x^i_m,x^{i+1}_{m-1})<a_m\delta'$. Then when $j=m$,
			\begin{align}
				&d(x_{m+1}^i,x_m^{i+1}) \nonumber \\   
				<&d(x_{m+1}^i,x_0^{i+m+1})+d(x_0^{i+m+1},x_1^{i+m})+d(x^{i+m}_1,x_2^{i+m-1})+\cdots+d(x_{m-1}^{i+2},x_m^{i+1})\nonumber\\
				<&\delta'+(a_1+a_2+\cdots+a_m)\delta'\nonumber\\
				=&a_{m+1}\delta'.\nonumber
			\end{align}
		\end{itemize} 
		By (1),(2) and (3), for any $0\leq j\leq M$, $d(x_{j+1}^i,x_j^{i+1})<a_{j+1}\delta'$. Let $0\leq j\leq M$. Since $a_{j+1}=1+(a_1+a_2+\cdots+a_j)$, $a_j=2^j$. Then $\frac{d(x_{j+1}^i,x_j^{i+1})}{j+1}<\frac{2^{j+1}\delta'}{j+1}<\delta$. Hence, $(x^i)_{i=0}^\infty$ is a $\delta$-pseudo-orbit of $\sigma$. }
	
	{Since $\sigma$ has shadowing, there exists $z=(z_0,z_1,\cdots)\in \underleftarrow{\lim} \underleftarrow{F}$ such that $\rho(\sigma^i(z),x^i)<\epsilon,\forall i\geq 0$. Since $\rho(\sigma^i(z),x^i)=\sup_{j\in\mathbb{N}}\frac{d(z_{i+j},x^i_j)}{j+1}$, $d(z_i,y_i)=d(z_i,x_0^i)<\epsilon,\forall i\geq0$. Since $z\in \underleftarrow{\lim} \underleftarrow{F}$, $(z_0,z_1,\cdots)$ is an orbit of $z_0$ of $F$. So, $F$ has shadowing.}
\end{proof}
\begin{theorem}\label{thm3}
	Suppose that $X$ is a compact metric space with metric $d$, the diameter of $X$ is equal to $1$, and $F:X\rightarrow 2^X$ is an upper semi-continuous set-valued map with $F(X)=X$. If $F$ has shadowing, then $\sigma$ has shadowing on $\underleftarrow{\lim} \underleftarrow{F}$.
\end{theorem}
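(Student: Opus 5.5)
The plan is to read a pseudo-orbit of $\sigma$ as a pseudo-orbit of $F$ through its $0$-th coordinates, shadow that with $F$, and then use the weights $1/(j+1)$ in $\rho$ to control the remaining coordinates. This avoids Lemma \ref{forth41} entirely.

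\textbf{Choice of constants.} Fix $\epsilon>0$. Choose $M\in\mathbb{Z}^+$ with $\frac{1}{M+2}<\epsilon$. Let $\epsilon'>0$ be such that $2\epsilon'<\epsilon$, and let $\delta_F$ be the constant given by the shadowing of $F$ for $\epsilon'$. Then choose
\[
0<\delta<\delta_F \quad\text{with}\quad \epsilon'+\tfrac{M(M+1)}{2}\delta<\epsilon .
\]
Note that $z\in\underleftarrow{\lim}\,\underleftarrow{F}$ means $z_i\in\underleftarrow{F}(z_{i+1})$, that is, $z_{i+1}\in F(z_i)$. So points of $\underleftarrow{\lim}\,\underleftarrow{F}$ are exactly forward orbits of $F$, and each $x^i$ satisfies $x^i_{j+1}\in F(x^i_j)$.

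\textbf{Step 1: pass to a pseudo-orbit of $F$.} Let $(x^i)_{i\ge0}$ be a $\delta$-pseudo-orbit of $\sigma$, and put $y_i=x^i_0$. From $\rho(\sigma(x^i),x^{i+1})<\delta$, the $j$-th term of the supremum gives
\[
d(x^i_{j+1},x^{i+1}_j)<(j+1)\delta \quad\text{for every } j .
\]
With $j=0$ this reads $d(x^i_1,y_{i+1})<\delta$. Since $x^i_1\in F(y_i)$, we get $d(F(y_i),y_{i+1})<\delta$, so $(y_i)$ is a $\delta_F$-pseudo-orbit of $F$. By the shadowing of $F$ there is an orbit $(z_i)$ of $F$ with $d(z_i,y_i)<\epsilon'$ for all $i$. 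Then $z=(z_0,z_1,\dots)\in\underleftarrow{\lim}\,\underleftarrow{F}$.

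\textbf{Step 2: estimate $\rho(\sigma^i(z),x^i)=\sup_j d(z_{i+j},x^i_j)/(j+1)$.} Split according to $j$.

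For $j\ge M+1$, the $j$-th term is at most $1/(M+2)<\epsilon$, because $\operatorname{diam}X=1$.

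For $0\le j\le M$, use
\[
d(z_{i+j},x^i_j)\le d(z_{i+j},y_{i+j})+d(x^{i+j}_0,x^i_j),
\]
where the first term is less than $\epsilon'$. For the second term, telescope along the diagonal:
\[
d(x^i_j,x^{i+j}_0)\le\sum_{k=0}^{j-1} d\bigl(x^{i+k}_{j-k},x^{i+k+1}_{j-k-1}\bigr)<\sum_{k=0}^{j-1}(j-k)\delta=\tfrac{j(j+1)}{2}\delta .
\]
Each summand is bounded by the displayed inequality from Step 1, applied with $i+k$ in place of $i$ and $j-k-1$ in place of $j$. Hence $d(z_{i+j},x^i_j)<\epsilon'+\frac{M(M+1)}{2}\delta<\epsilon$, and dividing by $j+1\ge1$ keeps the term below $\epsilon$.

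Combining both ranges of $j$, $\rho(\sigma^i(z),x^i)<\epsilon$ for every $i$. There is a small strict-versus-nonstrict issue in passing to the supremum; it is handled by running the argument with $\epsilon/2$ in place of $\epsilon$.

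\textbf{Main obstacle.} The only delicate point is making sure the tail of each $x^i$ cannot spoil the estimate. This is exactly what the discounting factor $1/(j+1)$ together with $\operatorname{diam}X=1$ handles. Neither upper semi-continuity nor surjectivity is needed beyond guaranteeing that $\underleftarrow{\lim}\,\underleftarrow{F}$ is a nonempty, well-defined compact space.
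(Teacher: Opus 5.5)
Your proof is correct and is essentially the paper's argument. Like the paper, you shadow the $0$-th coordinates $y_i=x^i_0$ by an orbit of $F$, bound $d(x^i_j,x^{i+j}_0)<\frac{j(j+1)}{2}\delta$ by telescoping along the diagonal, and dispose of the coordinates $j\ge M+1$ using the weight $\frac{1}{j+1}$ and $\operatorname{diam}X=1$. The differences are cosmetic: you bound before dividing by $j+1$, so you need $\delta$ of order $\epsilon/M^2$ rather than the paper's $\min\{\delta_0,\frac{\epsilon}{M+1}\}$. Your strict-versus-nonstrict caveat is also unnecessary, since only finitely many indices $j\le M$ are involved and the tail terms are uniformly bounded by $\frac{1}{M+2}<\epsilon$.
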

\begin{proof}
	Let $\epsilon>0$. Choose $M\in\mathbb{N}$ such that $\frac{1}{M+2}<\epsilon$ and let $\epsilon'=\frac{\epsilon}{2}$. Since $F$ has shadowing, there exists $\delta_0>0$ such that every $\delta_0$-pseudo-orbit of $F$ is $\epsilon'$-shadowed by some orbit of $F$. Let
	$$\delta=\min\Big\{\delta_0,\ \frac{\epsilon}{M+1}\Big\}.$$
	
	Let $(x^i)_{i=0}^\infty$ be a $\delta$-pseudo-orbit of $\sigma$ on $\underleftarrow{\lim} \underleftarrow{F}$, where for each $i\geq0$, $x^i=(x^i_0,x^i_1,\cdots)$ satisfies $x^i_{j+1}\in F(x^i_j)$ for all $j\geq0$, and $\rho(\sigma(x^i),x^{i+1})<\delta$. By the definition of $\rho$, for each $j\geq0$,
	$$\frac{d\big((\sigma(x^i))_j,x^{i+1}_j\big)}{j+1}=\frac{d\big(x^i_{j+1},x^{i+1}_j\big)}{j+1}<\delta,\quad\text{that is,}\quad d\big(x^i_{j+1},x^{i+1}_j\big)<(j+1)\delta.\quad(\ast)$$
	
	Let $w_i=x^i_0$ for each $i\geq0$. Taking $j=0$ in $(\ast)$ gives $d(x^i_1,w_{i+1})<\delta$. Since $x^i_1\in F(w_i)$, we have $d(F(w_i),w_{i+1})\leq d(x^i_1,w_{i+1})<\delta\leq\delta_0$, so $(w_i)_{i=0}^\infty$ is a $\delta_0$-pseudo-orbit of $F$. By the shadowing of $F$, there exists an orbit $(z_i)_{i=0}^\infty$ of $F$ (i.e. $z_{i+1}\in F(z_i)$ for all $i\geq0$) such that $d(z_i,w_i)<\epsilon'$ for all $i\geq0$. Then $z=(z_0,z_1,\cdots)\in \underleftarrow{\lim} \underleftarrow{F}$.
	
	For any $i\geq0$ and $j\geq1$, applying $(\ast)$ along the diagonal,
	$$d(x^i_j,x^{i+j}_0)\leq\sum_{k=0}^{j-1}d\big(x^{i+k}_{j-k},x^{i+k+1}_{j-k-1}\big)<\sum_{k=0}^{j-1}(j-k)\delta=\delta\,\frac{j(j+1)}{2}.$$
	
	Now for any $i\geq0$:
	\begin{itemize}
		\item[(1)]if $j\geq M+1$, then $\frac{d(z_{i+j},x^i_j)}{j+1}\leq\frac{1}{j+1}\leq\frac{1}{M+2}<\epsilon$;
		\item[(2)]if $0\leq j\leq M$, then
		$$\frac{d(z_{i+j},x^i_j)}{j+1}\leq\frac{d(z_{i+j},x^{i+j}_0)+d(x^{i+j}_0,x^i_j)}{j+1}<\frac{\epsilon'}{j+1}+\frac{\delta j}{2}\leq\frac{\epsilon}{2}+\frac{\delta M}{2}<\frac{\epsilon}{2}+\frac{\epsilon}{2}=\epsilon.$$
	\end{itemize}
	Hence $\rho(\sigma^i(z),x^i)=\sup_{j\geq0}\frac{d(z_{i+j},x^i_j)}{j+1}<\epsilon$ for all $i\geq0$. Therefore $\sigma$ has shadowing on $\underleftarrow{\lim} \underleftarrow{F}$.
\end{proof}

\begin{corollary}\label{cor1}
	Suppose that $X$ is a compact metric space with metric $d$, the diameter of $X$ is equal to $1$, and $F:X\rightarrow 2^X$ is an upper semi-continuous set-valued map with $F(X)=X$. Then $F$ has shadowing if and only if $\sigma$ has shadowing on $\underleftarrow{\lim} \underleftarrow{F}$.
\end{corollary}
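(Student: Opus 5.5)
This corollary is the conjunction of the two implications already proved, so I will simply combine them. The hypotheses of Corollary \ref{cor1} are exactly those shared by Theorem \ref{thm2} and Theorem \ref{thm3}: $X$ is compact metric of diameter $1$, and $F$ is upper semi-continuous and surjective, $F(X)=X$. No further verification of hypotheses is needed.

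For the direction ``$\sigma$ has shadowing on $\underleftarrow{\lim}\,\underleftarrow{F}$ implies $F$ has shadowing'', I will invoke Theorem \ref{thm2}. Recall how that proof goes. It uses Lemma \ref{forth41} to lift an $\eta$-pseudo-orbit $(y_i)$ of $F$ to a $\delta$-pseudo-orbit $(x^i)$ of $\sigma$, with each $x^i$ beginning at $y_i$. It then reads the zeroth coordinates of the shadowing point in $\underleftarrow{\lim}\,\underleftarrow{F}$, and these form an $F$-orbit that $\epsilon$-shadows $(y_i)$.

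For the converse, ``$F$ has shadowing implies $\sigma$ has shadowing'', I will invoke Theorem \ref{thm3}. There the zeroth coordinates $w_i=x^i_0$ of a $\delta$-pseudo-orbit of $\sigma$ form a $\delta_0$-pseudo-orbit of $F$. This pseudo-orbit is shadowed by an $F$-orbit $z$. The bound $d(x^i_j,x^{i+j}_0)<\delta j(j+1)/2$ along diagonals, together with the discount $1/(j+1)$ in $\rho$, then gives $\rho(\sigma^i(z),x^i)<\epsilon$.

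I expect no real obstacle, since both directions are already established. The only point worth checking is that the same metric $\rho$ on $\underleftarrow{\lim}\,\underleftarrow{F}\subset X^{\mathbb{N}}$ is used in both theorems. It is, so the two implications together yield the equivalence.
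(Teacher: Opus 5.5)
Your proposal is correct and matches the paper's proof, which likewise obtains the corollary immediately by combining Theorem~\ref{thm2} and Theorem~\ref{thm3}; their hypotheses coincide with those of the corollary. One caution about your recap of Theorem~\ref{thm2}, which does not affect the corollary: Lemma~\ref{forth41} as stated (shadowing by an orbit starting \emph{at} $x_0$) can fail for a merely upper semi-continuous $F$ (take $F(x)=\{x\}$ for $x\neq\frac12$, $F(\frac12)=[0,1]$, $\epsilon=\frac14$ and the chain $\frac12+\frac{\delta}{2},\frac12,1$), so the lifted points $x^i$ should only be required to start \emph{near} $y_i$, which a compactness and closed-graph argument supplies and which still suffices for Theorem~\ref{thm2}.
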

\begin{proof}
	This follows immediately from Theorem \ref{thm2} and Theorem \ref{thm3}.
\end{proof}

\begin{corollary}\label{cor2}
	Suppose that $X$ is a compact metric space with metric $d$, the diameter of $X$ is equal to $1$, and $F:X\rightarrow 2^X$ is an upper semi-continuous set-valued map with $F(X)=X$. Then $\underleftarrow{F}$ has shadowing if and only if $\sigma$ has shadowing on $\underleftarrow{\lim} F$.
\end{corollary}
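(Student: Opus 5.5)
The plan is to deduce Corollary \ref{cor2} from Corollary \ref{cor1} applied to the set-valued map $G=\underleftarrow{F}$ in place of $F$. This needs three checks: that $G$ satisfies the hypotheses of Corollary \ref{cor1}, that $\underleftarrow{G}=F$, and hence that $\underleftarrow{\lim}\,\underleftarrow{G}=\underleftarrow{\lim}F$ with the same shift $\sigma$ and metric $\rho$.

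\textbf{Step 1: $\underleftarrow{G}=F$.} This is immediate from the definition: $y\in\underleftarrow{G}(x)$ iff $x\in G(y)=\underleftarrow{F}(y)$ iff $y\in F(x)$. So $\underleftarrow{\lim}\,\underleftarrow{G}=\{(x_0,x_1,\dots): x_{i}\in \underleftarrow{G}(x_{i+1})\}=\{(x_0,x_1,\dots): x_i\in F(x_{i+1})\}=\underleftarrow{\lim}F$. The shift and the metric $\rho$ are literally the same objects.

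\textbf{Step 2: $G$ has nonempty values, is onto, and is u.s.c.}
\begin{itemize}
\item $G(x)\neq\emptyset$ for every $x$ is exactly the surjectivity $F(X)=X$.
\item $G(X)=X$ holds because every $F(y)$ is nonempty: pick $x\in F(y)$, and then $y\in G(x)$.
\item $G(x)$ is closed, and $G$ is u.s.c., because the graph $\Gamma_F=\{(y,x):x\in F(y)\}$ is closed in $X\times X$, since $F$ is u.s.c. with closed values on a compact metric space. The graph of $G$ is the flip of $\Gamma_F$, hence closed. A set-valued map with closed graph into a compact space is u.s.c.: if $G(x)\subset V$ open but $x_n\to x$ with $y_n\in G(x_n)\setminus V$, then a subsequential limit $y\notin V$ satisfies $(x,y)$ in the closed graph, so $y\in G(x)$, a contradiction.
\end{itemize}

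\textbf{Step 3: conclude.} Applying Corollary \ref{cor1} to $G$ gives: $G=\underleftarrow{F}$ has shadowing iff $\sigma$ has shadowing on $\underleftarrow{\lim}\,\underleftarrow{G}=\underleftarrow{\lim}F$. This is the claim.

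The only nontrivial point, and the expected obstacle, is Step 2: the closed-graph argument showing $\underleftarrow{F}$ inherits upper semi-continuity and closed values. Everything else is bookkeeping.
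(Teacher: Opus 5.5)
Your proposal is correct and follows the paper's proof exactly: you apply Corollary~\ref{cor1} to $\underleftarrow{F}$, using that $\underleftarrow{F}$ is upper semi-continuous and onto with $\underleftarrow{\underleftarrow{F}}=F$, so that $\underleftarrow{\lim}\,\underleftarrow{(\underleftarrow{F})}=\underleftarrow{\lim}F$. Your closed-graph argument for the upper semi-continuity and closed values of $\underleftarrow{F}$ supplies a justification that the paper only asserts.
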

\begin{proof}
	Since $F(X)=X$, the set-valued map $\underleftarrow{F}$ is upper semi-continuous with $\underleftarrow{F}(X)=X$, and $\underleftarrow{\underleftarrow{F}}=F$. Applying Corollary \ref{cor1} to $\underleftarrow{F}$ in place of $F$ yields that $\underleftarrow{F}$ has shadowing if and only if $\sigma$ has shadowing on $\underleftarrow{\lim} \underleftarrow{(\underleftarrow{F})}=\underleftarrow{\lim} F$.
\end{proof}

By Corollary \ref{cor1} and Corollary \ref{cor2}, the comparison between the shadowing of $F$ and that of its directly induced system $(\underleftarrow{\lim} F,\sigma)$ amounts to comparing the shadowing of $F$ with that of $\underleftarrow{F}$. The next theorem settles this comparison: the shadowing of $F$ and that of $\underleftarrow{F}$ are always equivalent, with no continuity or openness assumption on $F$.

\begin{theorem}\label{thm:equiv}
	Suppose that $X$ is a compact metric space with metric $d$ and $F:X\rightarrow 2^X$ is an upper semi-continuous set-valued map with $F(X)=X$. Then $F$ has shadowing if and only if $\underleftarrow{F}$ has shadowing.
\end{theorem}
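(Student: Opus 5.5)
The plan is to prove the direction ``$F$ has shadowing $\Rightarrow$ $\underleftarrow{F}$ has shadowing'' and obtain the converse by symmetry. First I record the symmetry. The graph $\{(x,y): y\in F(x)\}$ of $F$ is closed, because $F$ is u.s.c.\ with closed values on a compact space. The graph of $\underleftarrow{F}$ is its transpose, so it is also closed, and hence $\underleftarrow{F}$ is u.s.c.\ with closed values. Moreover $\underleftarrow{F}(x)\neq\emptyset$ because $F(X)=X$, and $\underleftarrow{F}(X)=X$ because every $F(y)$ is nonempty. Finally $\underleftarrow{\underleftarrow{F}}=F$. So once one direction is proved, applying it to $\underleftarrow{F}$ gives the other.

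I will not try to reverse a pseudo-orbit of $\underleftarrow{F}$ directly. That would need a uniform upper semi-continuity of $F$ of the form ``$d(t,x)$ small $\Rightarrow F(t)\subset B(F(x),\eta)$'', and this can fail for a merely u.s.c.\ map. Instead I reverse the \emph{witnesses}. Fix $\epsilon>0$, and let $\delta_0$ be given by the shadowing of $F$ for $\epsilon/2$. Put $\delta=\min\{\delta_0,\epsilon/2\}$.

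Let $(y_i)_{i\ge0}$ be a $\delta$-pseudo-orbit of $\underleftarrow{F}$. Then for each $i$ there is a witness $w_i\in\underleftarrow{F}(y_i)$, that is, $y_i\in F(w_i)$, with $d(w_i,y_{i+1})<\delta$. The key observation is that for each $n$ the finite reversed sequence
\[
w_{n-1},\,w_{n-2},\,\dots,\,w_0,\,y_0
\]
is a $\delta$-chain of $F$. Indeed, $d(F(w_i),w_{i-1})\le d(y_i,w_{i-1})<\delta$ since $y_i\in F(w_i)$, and $d(F(w_0),y_0)=0$.

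Next I extend this chain to an infinite $\delta$-pseudo-orbit of $F$ by continuing it along any orbit of $F$ from $y_0$; this is possible because $F$ has nonempty values. Shadowing of $F$ then gives an orbit of $F$ whose first $n+1$ terms $z^{(n)}_{n},z^{(n)}_{n-1},\dots,z^{(n)}_0$ satisfy two things. First, $z^{(n)}_{i}\in F(z^{(n)}_{i+1})$. Second, $d(z^{(n)}_0,y_0)<\epsilon/2$ and $d(z^{(n)}_{i+1},w_i)<\epsilon/2$. Consequently $d(z^{(n)}_{i+1},y_{i+1})<\epsilon/2+\delta\le\epsilon$. Read forwards, $z^{(n)}_0,z^{(n)}_1,\dots,z^{(n)}_n$ is a finite orbit of $\underleftarrow{F}$ that $\epsilon$-shadows $y_0,\dots,y_n$. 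To keep strict inequalities in the limit, I will run this argument with $\epsilon/2$ in place of $\epsilon$ throughout.

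The last step, which is the only delicate one, is passing from finite to infinite shadowing. By compactness of $X^{\mathbb{N}}$ and a diagonal argument, a subsequence of $(z^{(n)})_n$ converges coordinatewise to some $(z_i)_{i\ge0}$. Closedness of the graph of $F$ gives $z_i\in F(z_{i+1})$, so $(z_i)$ is an orbit of $\underleftarrow{F}$. Passing to the limit in the distance estimates gives $d(z_i,y_i)\le\epsilon/2<\epsilon$ for all $i$. This proves that $\underleftarrow{F}$ has shadowing, and symmetry finishes the theorem.
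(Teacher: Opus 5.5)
Your proposal is correct and follows essentially the same route as the paper: you reverse the witnesses $w_i\in\underleftarrow{F}(y_i)$ rather than the pseudo-orbit itself, and append $y_0$ as the terminal point to get a finite $\delta$-chain of $F$. You then shadow it after extending it to an infinite pseudo-orbit, and pass to a diagonal limit using the closed graph of $F$, with the converse following from the symmetry $\underleftarrow{\underleftarrow{F}}=F$. Your explicit justification via the transposed closed graph, showing that $\underleftarrow{F}$ is again upper semi-continuous, closed-valued and onto, fills in a point the paper only asserts.
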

\begin{proof}
	Since $\underleftarrow{F}$ is upper semi-continuous with $\underleftarrow{F}(X)=X$ and $\underleftarrow{\underleftarrow{F}}=F$, it suffices to prove that if $F$ has shadowing then $\underleftarrow{F}$ has shadowing.
	
	Assume that $F$ has shadowing and let $\epsilon>0$. Put $\epsilon_0=\frac{\epsilon}{2}$. By the shadowing of $F$ there is $\delta_0>0$ such that every $\delta_0$-pseudo-orbit of $F$ is $\epsilon_0$-shadowed by some orbit of $F$. Let $\delta=\min\{\delta_0,\frac{\epsilon}{2}\}$.
	
	We first record that every \emph{finite} $\delta_0$-pseudo-orbit $(v_j)_{j=0}^{m}$ of $F$ is $\epsilon_0$-shadowed by a finite orbit of $F$. Indeed, since $F$ maps into $2^X$ its values are nonempty, so we may choose $v_{m+1}\in F(v_m)$, $v_{m+2}\in F(v_{m+1}),\cdots$ and extend $(v_j)_{j=0}^{m}$ to an infinite $\delta_0$-pseudo-orbit; an $\epsilon_0$-shadowing orbit of the latter restricts to the desired finite orbit.
	
	Let $(x_i)_{i=0}^\infty$ be a $\delta$-pseudo-orbit of $\underleftarrow{F}$, that is, $d(x_{i+1},\underleftarrow{F}(x_i))<\delta$ for all $i\geq0$. Here $\underleftarrow{F}(x_i)\neq\emptyset$ precisely because $F(X)=X$. For each $i\geq0$ choose $y_i\in\underleftarrow{F}(x_i)$ with $d(y_i,x_{i+1})<\delta$; equivalently, $x_i\in F(y_i)$ and $d(y_i,x_{i+1})<\delta$.
	
	Fix $N\in\mathbb{N}$ and define a finite sequence $(w_j)_{j=0}^{N+1}$ by $w_j=y_{N-j}$ for $0\leq j\leq N$ and $w_{N+1}=x_0$. For $0\leq j\leq N-1$ we have $x_{N-j}\in F(y_{N-j})=F(w_j)$ and $d(w_{j+1},x_{N-j})=d(y_{N-j-1},x_{N-j})<\delta$, so $d(w_{j+1},F(w_j))<\delta$; and for $j=N$, $x_0\in F(y_0)=F(w_N)$ gives $d(w_{N+1},F(w_N))=0$. Hence $(w_j)_{j=0}^{N+1}$ is a finite $\delta_0$-pseudo-orbit of $F$ (recall $\delta\leq\delta_0$). By the finite shadowing established above there is a finite orbit $(u_j)_{j=0}^{N+1}$ of $F$, i.e. $u_{j+1}\in F(u_j)$ for $0\leq j\leq N$, with $d(u_j,w_j)<\epsilon_0$ for all $0\leq j\leq N+1$.
	
	Set $\zeta_i^N=u_{N+1-i}$ for $0\leq i\leq N+1$. Then $\zeta_i^N=u_{N+1-i}\in F(u_{N-i})=F(\zeta_{i+1}^N)$, that is, $\zeta_{i+1}^N\in\underleftarrow{F}(\zeta_i^N)$, so $(\zeta_i^N)_{i=0}^{N+1}$ is a finite orbit of $\underleftarrow{F}$. Moreover $d(\zeta_0^N,x_0)=d(u_{N+1},w_{N+1})<\epsilon_0<\epsilon$, while for $1\leq i\leq N+1$,
	$$d(\zeta_i^N,x_i)\leq d(u_{N+1-i},w_{N+1-i})+d(y_{i-1},x_i)<\epsilon_0+\delta\leq\epsilon,$$
	using $w_{N+1-i}=y_{i-1}$ and $d(y_{i-1},x_i)<\delta$. Thus $d(\zeta_i^N,x_i)<\epsilon$ for all $0\leq i\leq N+1$.
	
	Finally, since $X$ is compact, a diagonal argument provides $N_1<N_2<\cdots$ such that $\zeta_i:=\lim_{k\to\infty}\zeta_i^{N_k}$ exists for every $i\geq0$. As the graph of $F$ is closed and $\zeta_i^{N_k}\in F(\zeta_{i+1}^{N_k})$ once $N_k\geq i+1$, letting $k\to\infty$ gives $\zeta_i\in F(\zeta_{i+1})$ for all $i\geq0$; that is, $(\zeta_i)_{i=0}^\infty$ is an orbit of $\underleftarrow{F}$. Passing to the limit in the estimates above yields $d(\zeta_i,x_i)\leq\epsilon$ for all $i\geq0$. As $\epsilon>0$ was arbitrary (applying the construction to $\frac{\epsilon}{2}$ gives strict inequality), $\underleftarrow{F}$ has shadowing.
\end{proof}

Combining Theorem \ref{thm:equiv} with Corollary \ref{cor1} and Corollary \ref{cor2} we obtain a complete picture.

\begin{corollary}\label{cor:all}
	Suppose that $X$ is a compact metric space with metric $d$, the diameter of $X$ is equal to $1$, and $F:X\rightarrow 2^X$ is an upper semi-continuous set-valued map with $F(X)=X$. Then the following are equivalent:
	\begin{enumerate}
		\item $F$ has shadowing;
		\item $\underleftarrow{F}$ has shadowing;
		\item $\sigma$ has shadowing on $\underleftarrow{\lim} \underleftarrow{F}$;
		\item $\sigma$ has shadowing on $\underleftarrow{\lim} F$.
	\end{enumerate}
\end{corollary}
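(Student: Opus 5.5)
The plan is to assemble the four equivalences from results already proved, checking along the way that their hypotheses hold under the present assumptions. The logical skeleton is
\[
(3)\Longleftrightarrow(1)\Longleftrightarrow(2)\Longleftrightarrow(4).
\]

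First, $(1)\Leftrightarrow(3)$ is exactly Corollary \ref{cor1}. Its hypotheses are precisely those of the present statement: $X$ is compact with diameter $1$, and $F$ is upper semi-continuous with $F(X)=X$. So it applies verbatim.

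Second, $(1)\Leftrightarrow(2)$ is Theorem \ref{thm:equiv}. That theorem requires only compactness, upper semi-continuity and surjectivity, so the diameter normalization is harmless.

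Third, $(2)\Leftrightarrow(4)$ is Corollary \ref{cor2}. The only point needing care is the fact invoked inside its proof: when $F$ is upper semi-continuous with $F(X)=X$, the map $\underleftarrow{F}$ is again upper semi-continuous, takes values in $2^X$, and satisfies $\underleftarrow{F}(X)=X$.
\begin{itemize}
\item Since $X$ is compact, upper semi-continuity of $F$ is equivalent to closedness of the graph $G(F)=\{(x,y):y\in F(x)\}$. The graph of $\underleftarrow{F}$ is the transpose of $G(F)$, hence also closed.
\item Each value $\underleftarrow{F}(x)$ is therefore closed. It is nonempty because $F(X)=X$.
\item Closed graph in a compact space gives upper semi-continuity of $\underleftarrow{F}$.
\item $\underleftarrow{F}(X)=X$ holds because every $F(y)$ is nonempty: any $y$ lies in $\underleftarrow{F}(x)$ for any $x\in F(y)$.
\end{itemize}
With this in hand, Corollary \ref{cor2} applies, using $\underleftarrow{\underleftarrow{F}}=F$.

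Chaining the three equivalences yields the corollary, including the highlighted equivalence $(1)\Leftrightarrow(4)$. I expect no real obstacle here. The only step deserving a sentence of justification is the closed-graph argument showing $\underleftarrow{F}$ inherits the standing hypotheses, since that is what makes Corollary \ref{cor2} legitimately available.
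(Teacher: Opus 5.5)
Your proposal is correct and is essentially the paper's proof, which likewise obtains $(1)\Leftrightarrow(3)$ from Corollary \ref{cor1}, $(2)\Leftrightarrow(4)$ from Corollary \ref{cor2}, and $(1)\Leftrightarrow(2)$ from Theorem \ref{thm:equiv}. Your closed-graph check that $\underleftarrow{F}$ is again an onto upper semi-continuous map into $2^X$ makes explicit a fact the paper only asserts inside the proof of Corollary \ref{cor2}.
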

\begin{proof}
	$(1)\Leftrightarrow(3)$ is Corollary \ref{cor1}, $(2)\Leftrightarrow(4)$ is Corollary \ref{cor2}, and $(1)\Leftrightarrow(2)$ is Theorem \ref{thm:equiv}.
\end{proof}

In particular, taking the equivalence $(1)\Leftrightarrow(4)$ of Corollary \ref{cor:all} answers the question that motivated this study, namely the relationship between the shadowing of a set-valued map and that of its directly induced generalized inverse limit.
%

\begin{remark}\label{rem:bridge}
	Corollary \ref{cor2} identifies the shadowing of the shift $\sigma$ on the generalized inverse limit $\underleftarrow{\lim} F$ \emph{directly induced by $F$} with the shadowing of the inverse set-valued map $\underleftarrow{F}$, while, by Corollary \ref{cor1}, the inverse limit system that characterizes $F$ itself is $(\underleftarrow{\lim} \underleftarrow{F},\sigma)$. A priori these two systems need not have the same shadowing behaviour. Theorem \ref{thm:equiv} shows that in fact they do: the shadowing of $F$ and of $\underleftarrow{F}$ always coincide, so by Corollary \ref{cor:all} the four systems $F$, $\underleftarrow{F}$, $(\underleftarrow{\lim} \underleftarrow{F},\sigma)$ and $(\underleftarrow{\lim} F,\sigma)$ either all have shadowing or all fail to have it.
\end{remark}

\begin{remark}\label{rem:strengthen}
	Theorem \ref{thm:equiv} strengthens the recent result of Khan, Kumar and Das \cite{KhanKumarDas}, who proved that for an onto \emph{continuous} set-valued map $F$ the shadowing of $F$ implies that of $\underleftarrow{F}$, and that the two are equivalent when, in addition, $F$ is \emph{open}. Their argument reverses finite pseudo-orbits coordinate by coordinate, a step that forces $F$ to be lo
	wer semi-continuous, hence continuous. The proof of Theorem \ref{thm:equiv} avoids reversing the pseudo-orbit $(x_i)$ itself; instead it reverses the witnesses $y_i\in\underleftarrow{F}(x_i)$, for which the membership $x_i\in F(y_i)$ is \emph{exact}, and recovers the missing initial coordinate by appending $x_0$ as the terminal point of a finite forward pseudo-orbit. Only the closedness of the graph of $F$ (equivalently, the upper semi-continuity of $F$) is used, so neither continuity nor openness is needed.
\end{remark}

\begin{example}\label{ex:usc}
	Theorem \ref{thm:equiv} genuinely goes beyond the continuous case. Let $X=[0,1]$ with the usual metric and define $F:X\rightarrow 2^X$ by
	$$F(x)=\begin{cases}\{0\}, & 0\leq x<1,\\[2pt] [0,1], & x=1.\end{cases}$$
	Its graph $([0,1)\times\{0\})\cup(\{1\}\times[0,1])$ is closed, so $F$ is upper semi-continuous, and $F(X)=X$. However $F$ is not lower semi-continuous at $x=1$ (nearby points have image $\{0\}$, far from $F(1)=[0,1]$), hence not continuous; nor is it open, since the image of the open set $(\tfrac12,1)$ is the non-open set $\{0\}$. Thus $F$ lies outside the scope of \cite{KhanKumarDas}.
	
	The inverse set-valued map is
	$$\underleftarrow{F}(x)=\begin{cases}[0,1], & x=0,\\[2pt] \{1\}, & 0<x\leq1,\end{cases}$$
	which is again upper semi-continuous and onto. The map $F$ has shadowing: since $\operatorname{diam}X=1$ we may assume $\epsilon\leq1$, and given such an $\epsilon>0$ we take $0<\delta<\epsilon\leq1$. In any $\delta$-pseudo-orbit $(x_i)$ one has $x_i<\delta$ for every $i\geq1$ with $x_{i-1}<1$, since then $F(x_{i-1})=\{0\}$. Consequently, there exists $l\in\mathbb{Z^+}$ such that $$x_i\begin{cases}=1, & 0\leq i\leq l-1,\\<1, & i=l,\\ <\delta, & i>l.\end{cases}$$ Define $$z_i=\begin{cases}1, & 0\leq i\leq l-1,\\x_i, & i=l,\\ 0, & i>l.\end{cases}$$ Then $(z_i)$ is an orbit of $F$ and $(z_i)$ $\epsilon$-shadows $(x_i)$. By Theorem \ref{thm:equiv}, $\underleftarrow{F}$ has shadowing as well; equivalently, by Corollary \ref{cor:all}, the shifts on $\underleftarrow{\lim} \underleftarrow{F}$ and on $\underleftarrow{\lim} F$ both have shadowing.
	
\end{example}

The equivalence of Theorem \ref{thm:equiv} also operates in the negative direction: $F$ and $\underleftarrow{F}$ fail to have shadowing simultaneously. The next example is a non-trivial, genuinely set-valued instance of this.

\begin{example}\label{ex:nodrift}
	Let $X=[0,1]$ with the usual metric and define $F:X\rightarrow 2^X$ by $F(x)=[0,x]$. Its graph $\{(x,y):0\leq y\leq x\leq1\}$ is closed and $F(X)=X$, so $F$ is an onto upper semi-continuous (indeed Hausdorff continuous) set-valued map, and the orbits of $F$ are precisely the non-increasing sequences in $[0,1]$. The inverse set-valued map $\underleftarrow{F}(x)=[x,1]$ is of the same type, with the non-decreasing sequences as orbits.
	
	The map $F$ does not have shadowing. Fix $\epsilon=\tfrac14$ and let $\delta>0$ be arbitrary. The sequence $x_i=\min\{i\delta/2,\,1\}$ is a $\delta$-pseudo-orbit, since $d\big(x_{i+1},F(x_i)\big)=d\big(x_{i+1},[0,x_i]\big)=\max\{x_{i+1}-x_i,0\}\leq\delta/2<\delta$. Any orbit $(z_i)$ of $F$ is non-increasing, so $z_i\leq z_0$ for all $i$; if it $\epsilon$-shadowed $(x_i)$ then $z_0<x_0+\epsilon=\tfrac14$, hence $z_i<\tfrac14$ for all $i$, whereas $x_i\to1$, so $d(z_i,x_i)>\tfrac12$ for large $i$, a contradiction. Consequently, by Theorem \ref{thm:equiv}, $\underleftarrow{F}$ has no shadowing either, and by Corollary \ref{cor:all} neither do the shifts on $\underleftarrow{\lim} \underleftarrow{F}$ and on $\underleftarrow{\lim} F$. (The failure for $\underleftarrow{F}$ can also be seen directly through the conjugacy $\underleftarrow{F}=h\circ F\circ h^{-1}$, $h(x)=1-x$, or by repeating the drift argument downward.)
\end{example}

The hypotheses of Theorem \ref{thm:equiv} are also met by genuinely set-valued maps with rich dynamics, not merely by the elementary maps above.

\begin{example}\label{ex:doubling}
	Let $X=[0,1]$ and let $T:X\rightarrow 2^X$ be the set-valued doubling map whose graph is the union of the two closed segments
	$$\operatorname{Gr}(T)=\{(x,2x):0\leq x\leq\tfrac12\}\cup\{(x,2x-1):\tfrac12\leq x\leq1\},$$
	that is, $T(x)=\{2x\}$ for $x<\tfrac12$, $T(x)=\{2x-1\}$ for $x>\tfrac12$, and $T(\tfrac12)=\{0,1\}$. Since $X$ is compact and $\operatorname{Gr}(T)$ is closed (a finite union of segments), $T$ is upper semi-continuous, and it is clearly onto. Its inverse is the two-valued contraction
	$$F:=\underleftarrow{T},\qquad F(x)=\Big\{\tfrac{x}{2},\ \tfrac{x+1}{2}\Big\},$$
	a Mahavier map whose graph is the union of the two segments of slope $\tfrac12$ from $(0,0)$ to $(1,\tfrac12)$ and from $(0,\tfrac12)$ to $(1,1)$; it is Hausdorff continuous, onto, and properly set-valued, since every value $F(x)$ consists of two distinct points. The doubling map $T$ is open and expanding, hence has the shadowing property; therefore, by Theorem \ref{thm:equiv}, the multi-valued map $F$ has shadowing as well, and by Corollary \ref{cor:all} so do the shifts on $\underleftarrow{\lim} F$ and on $\underleftarrow{\lim} \underleftarrow{F}=\underleftarrow{\lim} T$. Here $\underleftarrow{\lim} F$ is a non-degenerate continuum (the inverse limit of the doubling map), so a topologically rich system carries the shadowing shift; this lies well beyond the elementary Examples \ref{ex:usc} and \ref{ex:nodrift}.
\end{example}

\begin{remark}\label{rem:hypotheses}
	The hypotheses of Theorem \ref{thm:equiv} are economical. Surjectivity $F(X)=X$ cannot be dropped: it is exactly what guarantees that $\underleftarrow{F}(x)=\{y:x\in F(y)\}$ is nonempty for every $x$, so that $\underleftarrow{F}$ is again a set-valued map $X\rightarrow 2^X$ (and that $\underleftarrow{\lim} F\neq\emptyset$); without it the statement loses its meaning. Upper semi-continuity enters only through the resulting closedness of the graph of $F$ on the compact space $X$, which is what keeps the diagonal limit in the proof an orbit; it is also the weakest regularity under which $F$ is guaranteed to take values in $2^X$. By contrast, neither lower semi-continuity (equivalently, continuity) nor openness of $F$ is required, in contrast with \cite{KhanKumarDas}, and Example \ref{ex:usc} shows that this gain is genuine. Finally, the normalization $\operatorname{diam}X=1$, used in Corollaries \ref{cor1}, \ref{cor2} and \ref{cor:all} through the metric $\rho$, is harmless: rescaling the metric of any compact metric space makes its diameter equal to $1$ without affecting any of the dynamical notions considered here.
\end{remark}
\section{Transitivity and mixing of a set-valued map and its inverse limit}\label{sectran}
First, we present a generalization. Fakhari and Ghane\cite{Fakhari2010} proved that for a continuous self-map $f$ on $X$, if $f$ has the shadowing property then $f$ is chain mixing if and only if it is topologically mixing. In fact, analogous results also hold for set-valued maps. Furthermore, there have been many publications on inverse limits of set-valued maps, but the associated dynamics of the set-valued maps and the relationships between a set-valued map and its inverse limit have not been studied in depth. Here we mainly study transitivity, weakly mixing, mixing, chain transitivity and chain mixing of a set-valued map and its inverse limit. These results parallel and extend the transitivity, sensitivity and chaos analysis of \cite{zhao2023}; we include them for completeness.

\begin{lemma}\label{thm6}
	Let $X$ be a compact metric space with metric $d$ and $F:X\rightarrow 2^X$ be an upper semi-continuous set-valued map. Let $F$ have shadowing. Then the following statements are equivalent:
	\begin{enumerate}
		
		\item $F$ is topologically mixing.
		
		\item $F$ is chain mixing.
	\end{enumerate}
\end{lemma}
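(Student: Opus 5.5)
We prove the two implications separately. Shadowing is needed only for (2)$\Rightarrow$(1). Upper semi-continuity enters through the uniform estimate used in Lemma \ref{forth41}: for every $\eta>0$ there is $\gamma>0$ such that $d(t,x)<\gamma$ implies $F(t)\subset B(F(x),\eta)$ for all $x\in X$.

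\emph{(1)$\Rightarrow$(2).} Fix $x,y\in X$ and $\delta>0$. Put $U=B(x,\gamma)$ and $V=B(y,\delta)$, where $\gamma\le\delta$ is chosen from the uniform u.s.c. estimate with $\eta=\delta$. By mixing there is $M$ such that $F^n(U)\cap V\neq\emptyset$ for every $n\ge M$. For such $n$, pick $u\in U$ and a finite orbit $u=u_0,u_1,\dots,u_n$ of $F$ with $u_n\in V$. Replacing $u_0$ by $x$ and $u_n$ by $y$ gives a $\delta$-chain from $x$ to $y$:
\begin{itemize}
\item $d(F(x),u_1)<\delta$, because $u_1\in F(u)\subset B(F(x),\delta)$ by the choice of $\gamma$;
\item $d(F(u_{n-1}),y)\le d(u_n,y)<\delta$.
\end{itemize}
(When $n=1$ both replacements happen together and the same estimates apply.) The chain has length $n+1$ and $n\ge M$ is arbitrary, so $F$ is chain mixing, with threshold $N=M+1$.

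\emph{(2)$\Rightarrow$(1).} Let $U,V$ be nonempty open sets. Choose $x\in U$, $y\in V$ and $\epsilon>0$ with $B(x,\epsilon)\subset U$ and $B(y,\epsilon)\subset V$. Let $\delta$ be given by the shadowing of $F$ for this $\epsilon$.
\begin{itemize}
\item By chain mixing there is $N$ such that for every $n\ge N$ there is a $\delta$-chain $x=x_0,\dots,x_{n-1}=y$ from $x$ to $y$ of length $n$.
\item Extend this finite chain to an infinite $\delta$-pseudo-orbit by choosing $x_{j+1}\in F(x_j)$ for $j\ge n-1$; this is the same extension trick as in the proof of Theorem \ref{thm:equiv}, and it works because the values of $F$ are nonempty.
\item Shadowing gives an orbit $(z_i)$ of $F$ with $d(z_i,x_i)<\epsilon$ for all $i$. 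Then $z_0\in B(x,\epsilon)\subset U$ and $z_{n-1}\in B(y,\epsilon)\subset V$.
\item By induction $z_{n-1}\in F^{n-1}(z_0)$, using $F^k(x)=\bigcup_{y\in F^{k-1}(x)}F(y)$. Hence $F^{n-1}(U)\cap V\neq\emptyset$ for all $n\ge N$.
\end{itemize}
So $F$ is mixing with threshold $N-1$.

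The only step needing care is the first implication, where the endpoint $x$ of the chain is not itself in the orbit. The uniform u.s.c. estimate is exactly what makes replacing $u_0$ by $x$ cost less than $\delta$. The length bookkeeping between $n$ and $n\pm1$ is routine, as is the lemma's tacit assumption that $F$ is nonempty-valued.
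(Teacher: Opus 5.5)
Your proof is correct. Your (2)$\Rightarrow$(1) is the paper's argument, done more carefully. The paper applies shadowing directly to the finite $\delta$-chain; you first extend it to an infinite $\delta$-pseudo-orbit, as the definition of shadowing requires, and you track the iterate $F^{n-1}$ correctly.

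For (1)$\Rightarrow$(2) you take a different route. The paper applies mixing to the open sets $B(F(x),\delta)$ and $B(y,\delta)$. The orbit's first point $z_1$ then already satisfies $d(F(x),z_1)<\delta$, so one just prepends $x$ and replaces the last point by $y$. No semicontinuity is used, so in the paper's version mixing $\Rightarrow$ chain mixing holds for any nonempty-valued $F$. You instead apply mixing to $B(x,\gamma)$ and use upper semi-continuity to pay for replacing $u_0$ by $x$. That works, but your stated justification is wrong.

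The \emph{uniform} estimate ``$d(t,x)<\gamma\Rightarrow F(t)\subset B(F(x),\eta)$ for all $x\in X$'' is false for merely u.s.c.\ maps. The hypothesis is symmetric in $t$ and $x$, so the estimate would force $F$ to be Hausdorff continuous. Example \ref{ex:usc} is a counterexample: take $t=1$ and $x<1$ close to $1$. Citing Lemma \ref{forth41} does not help, since its proof asserts the same uniform claim. Fortunately you only use the estimate at the single fixed point $x$. There it is exactly the definition of upper semi-continuity at $x$, applied to the open set $B(F(x),\delta)\supset F(x)$, so cite that instead.

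Your parenthetical about $n=1$ is also inaccurate: the two simultaneous replacements only give $d(F(x),y)<2\delta$. This is harmless, since you may restrict to $n\geq\max\{M,2\}$.
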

\begin{proof}

	{$2\Rightarrow1$}: Suppose that $F$ has shadowing and $F$ is chain mixing. Let $U,V\subset X$ be two nonempty open sets. Select $x\in U$, $y\in V$ and $\epsilon>0$ such that $B(x, \epsilon)\subset U$ and $B(y, \epsilon)\subset V$. Let $\delta>0$ witness shadowing {of $F$} for $\epsilon$. Since $F$ is chain mixing, there is a positive integer $N$ such that for any integer $n\geq N$ there is a $\delta$-chain from $x$ to $y$ with length $n$. Denote the $\delta$-chain by $(x_i)_{i=1}^n$ with $x_1=x$ and $x_n=y$. By shadowing of $F$, there is $z_1\in X$ with an finite orbit $(z_i)_{i=1}^n$ such that $d(z_i, x_i)<\epsilon$ for any $1\leq i\leq n$. So $z_1\in B(x,\epsilon)\subset U$ implies $z_1\in U$ and $z_n\in B(y, \epsilon)\subset V$ implies $z_n\in V$. So, $F$ is topologically mixing.
	
	$2\Leftarrow1$: Suppose that $F$ has shadowing and $F$ is topologically mixing. Let $x,y\in X$ and $\delta>0$. Since $B(F(x),\delta)$ {and} $B(y, \delta)\subset X$ are two non-empty open sets, there is $N\in\mathbb{Z}^+$ such that for any integer $n\geq N$, there is $z_1\in B(F(x), \delta)$ with an orbit $(z_i)_{i=1}^\infty$ with $z_n\in B(y, \delta)$. Put $z_0'=x$, $z_i'=z_i$, $1\leq i\leq n-1$, $z_n'=y$. Then, $d(F(z_0'),z_1)<\delta$ {and} $d(F(z_i'),z_{i+1}')<\delta$ for any $1\leq i\leq n-2$. By $z_n\in B(y,\delta)$ and $z_n\in F(z_{n-1})$,  $d(F(z_{n-1}',z_n')=d(F(z_{n-1}),y)<\delta$. Hence, there exists a $\delta$-chain from $x$ to $y$ with length $n$. So, $F$ is chain mixing.
	
\end{proof}

Based on Theorem 3.13 from \cite{Luo2020} and the above Lemma \ref{thm6}, the following conclusion can be drawn.
\begin{corollary}
	Let $X$ be a compact metric space with metric $d$ and $F:X\rightarrow 2^X$ be an upper semi-continuous set-valued map. Let $F$ have shadowing. Then the following statements are equivalent:
	\begin{enumerate}
		\item $F$ is totally transitive.
		
		\item $F$ is weakly mixing.
		
		\item $F$ is topologically mixing.
		
		\item $F$ has the specification property.
		
		\item $F$ is chain mixing.
	\end{enumerate}
\end{corollary}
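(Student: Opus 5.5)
The plan is to obtain the corollary by splicing Lemma \ref{thm6} onto Theorem 3.13 of \cite{Luo2020}. That theorem states that, for an upper semi-continuous set-valued map on a compact metric space with shadowing, total transitivity, weak mixing, topological mixing and the specification property are mutually equivalent. Granting it, (1)$\Leftrightarrow$(2)$\Leftrightarrow$(3)$\Leftrightarrow$(4) is immediate. Lemma \ref{thm6} gives (3)$\Leftrightarrow$(5) under exactly the same hypotheses, since $F$ is upper semi-continuous and has shadowing. Chaining the two yields the five-fold equivalence.

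The only real work is checking that the standing hypotheses match those of \cite{Luo2020}. I would verify three points.

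\begin{itemize}
\item Luo, Nie and Yin use the same notion of $\delta$-pseudo-orbit, namely $d(F(x_i),x_{i+1})<\delta$, and the same orbit-shadowing definition.
\item Their ``totally transitive'' means that $F^n$ is transitive for every $n\in\mathbb{Z}^+$.
\item Their weak mixing agrees with ours. If theirs is the product definition ($F\times F$ transitive) rather than the ``arbitrarily long runs in $N_F(U,V)$'' definition used here, I would bridge the gap.
\end{itemize}

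The bridge goes as follows. Mixing trivially gives long runs in $N_F(U,V)$. Conversely, long runs imply total transitivity, because a run of length $n$ contains a multiple of $n$ and $F^{kn}=(F^n)^k$. Routing through (3) and (1) in this way avoids needing to compare the two weak-mixing definitions directly.

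If the cited theorem omits the link total transitivity $\Rightarrow$ mixing, I would supply a direct argument along the lines of the single-valued case, chain transitivity plus shadowing:

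\begin{enumerate}
\item Total transitivity makes $F$ chain transitive, and indeed $F^n$ chain transitive for every $n$.
\item For fixed $x$ and $\delta$, the set of lengths of $\delta$-chains from $x$ to itself is closed under addition, since chains can be concatenated.
\item Total transitivity forces the gcd of these lengths to be $1$. If every loop length were divisible by some $m>1$, the space would split into $m$ cyclically permuted chain classes, which contradicts transitivity of $F^m$.
\item A numerical semigroup with gcd $1$ contains all sufficiently large integers. Inserting a chain from $x$ to $y$ then gives chain mixing.
\item Lemma \ref{thm6} converts chain mixing into mixing.
\end{enumerate}

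The main obstacle is step 3, the gcd/cyclic-decomposition argument in the set-valued setting. There I would mimic the single-valued proof, using only upper semi-continuity, which gives uniform control of $F$ on $\delta$-neighbourhoods.
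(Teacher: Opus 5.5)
Your proposal follows the paper's proof exactly: the paper obtains the corollary in one line by combining Theorem 3.13 of \cite{Luo2020}, which gives the equivalence of (1)--(4), with Lemma \ref{thm6}, which gives (3)$\Leftrightarrow$(5). Your checks that the definitions match, the bridge placing the ``long runs'' form of weak mixing between mixing and total transitivity, and the fallback chain-mixing argument are extra care the paper omits, not a different route.
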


\begin{proposition}\label{prop1}
	Suppose that $X$ is a compact metric space with metric $d$ and $F:X\rightarrow 2^X$ is an upper semi-continuous set-valued map. Let $F(X)=X$. $F$ is transitive (resp. {weakly mixing, mixing, chain transitive, chain mixing}) if and only if $\underleftarrow{F}$ is transitive (resp. {weakly mixing, mixing, chain transitive, chain mixing}).
\end{proposition}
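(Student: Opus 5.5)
The plan is to exploit the symmetry $\underleftarrow{\underleftarrow{F}}=F$. As noted in the proof of Corollary \ref{cor2}, $F(X)=X$ makes $\underleftarrow{F}$ an upper semi-continuous map into $2^X$ with $\underleftarrow{F}(X)=X$. So it suffices to prove one implication, say that each property for $F$ implies the same property for $\underleftarrow{F}$; applying this to $\underleftarrow{F}$ in place of $F$ then gives the converse. For the open-set properties I would reduce everything to one identity, and for the chain properties to a uniform semicontinuity estimate.

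\emph{Transitivity, weak mixing, mixing.} First I would show by induction on $n$ that $y\in F^n(x)$ if and only if $x\in\underleftarrow{F}^n(y)$. Both sides say there is a chain $x=u_0,u_1,\dots,u_n=y$ with $u_{k+1}\in F(u_k)$, and the induction step just splits off the last link. From this, for nonempty open $U,V$,
$$F^n(U)\cap V\neq\emptyset\iff \exists\,u\in U,\ v\in V \text{ with } v\in F^n(u)\iff \underleftarrow{F}^n(V)\cap U\neq\emptyset,$$
so $N_F(U,V)=N_{\underleftarrow{F}}(V,U)$. Transitivity, weak mixing and mixing are each a condition on $N(\cdot,\cdot)$ required for \emph{all} ordered pairs of nonempty open sets: nonemptiness, containing arbitrarily long runs, or containing a tail. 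Swapping the roles of $U$ and $V$ therefore transfers each property from $F$ to $\underleftarrow{F}$.

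\emph{Chain transitivity, chain mixing.} Here I would reverse chains using witnesses, as in the proof of Theorem \ref{thm:equiv}. The preliminary step, which I expect to be the main technical point, is uniform upper semi-continuity of $\underleftarrow{F}$: for every $\delta>0$ there is $\eta>0$ such that $d(a,b)<\eta$ implies $\underleftarrow{F}(a)\subset B(\underleftarrow{F}(b),\delta)$. I would prove this by contradiction using compactness. Suppose there are $a_k,b_k$ with $d(a_k,b_k)\to0$ and points $w_k\in\underleftarrow{F}(a_k)$ with $d(w_k,\underleftarrow{F}(b_k))\geq\delta$. Passing to a subsequence, $a_k,b_k\to c$ and $w_k\to w$. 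Since the graph of $F$ is closed, so is the graph of $\underleftarrow{F}$, hence $w\in\underleftarrow{F}(c)$. Upper semi-continuity of $\underleftarrow{F}$ at $c$ then gives $\underleftarrow{F}(b_k)\subset B(\underleftarrow{F}(c),\delta/2)$ for large $k$. This does not immediately contradict the choice of $w_k$, so I would fix the ordering: prove directly the closed-graph consequence that any limit of points of $\underleftarrow{F}(a_k)$ lies in $\underleftarrow{F}(c)$, and set the contradiction up the other way, letting $a_k$ play the role of the point near which the target is controlled. If needed, I would use the form already employed in Lemma \ref{forth41}, which derives such a uniform statement from upper semi-continuity and compactness, applied to $\underleftarrow{F}$.

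Granting this, fix $\delta$ and take the corresponding $\eta$. Let $x_0=x,\dots,x_n=y$ be an $\eta$-chain of $F$. Choose witnesses $y_{i+1}\in F(x_i)$ with $d(y_{i+1},x_{i+1})<\eta$. Then $x_i\in\underleftarrow{F}(y_{i+1})\subset B(\underleftarrow{F}(x_{i+1}),\delta)$, so $d(\underleftarrow{F}(x_{i+1}),x_i)<\delta$. Hence $x_n,x_{n-1},\dots,x_0$ is a $\delta$-chain of $\underleftarrow{F}$ from $y$ to $x$ of the same length. Since chain transitivity and chain mixing quantify over all ordered pairs $(x,y)$, with the same length threshold $N$ for chain mixing, both properties transfer from $F$ to $\underleftarrow{F}$.
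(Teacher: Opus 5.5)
Your argument for transitivity, weak mixing and mixing is correct and coincides with the paper's. The identity $y\in F^n(x)\iff x\in\underleftarrow{F}^n(y)$ gives $N_F(U,V)=N_{\underleftarrow{F}}(V,U)$, and $\underleftarrow{\underleftarrow{F}}=F$ gives the converse.

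The chain part, however, rests on a step that is false. The estimate ``$d(a,b)<\eta$ implies $\underleftarrow{F}(a)\subset B(\underleftarrow{F}(b),\delta)$'' is symmetric in $a$ and $b$. So it says that the Hausdorff distance between $\underleftarrow{F}(a)$ and $\underleftarrow{F}(b)$ is at most $\delta$, i.e.\ that $\underleftarrow{F}$ is uniformly continuous, which is not assumed. That is why your contradiction argument stalls: closing it would need lower semicontinuity of $\underleftarrow{F}$ at $c$, and no reordering can supply that.

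Concretely, take the map of Example \ref{ex:usc}. There $\underleftarrow{F}(0)=[0,1]$ and $\underleftarrow{F}(t)=\{1\}$ for $t>0$, so $a=0$, $b=\eta/2$ violates the estimate for every $\eta$ when $\delta\le 1$. Your reversal step fails there as well. For any $\eta\in(0,1)$, the pair $x_i=\tfrac12$, $x_{i+1}=\eta/2$ is a legitimate link of an $\eta$-chain of $F$, with witness $0\in F(\tfrac12)$. Yet $d(\underleftarrow{F}(\eta/2),\tfrac12)=d(\{1\},\tfrac12)=\tfrac12$, so the reversed sequence is not a $\delta$-chain of $\underleftarrow{F}$ for $\delta\le\tfrac12$.

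Falling back on Lemma \ref{forth41} does not help, since its first step asserts the same uniform property for $F$. The paper's own proof of the two chain cases also asserts exactly this estimate for $\underleftarrow{F}$, so it cannot be cited to close the gap.

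The statement is nevertheless true. The repair is the idea you named but did not carry out: reverse the witnesses, not the chain points.
\begin{itemize}
\item Given $x,y\in X$ and $\delta>0$, use $F(X)=X$ to choose $y'$ with $y\in F(y')$.
\item Take a $\delta$-chain $x_0=x,\dots,x_n=y'$ of $F$ with witnesses $y_{i+1}\in F(x_i)$ and $d(y_{i+1},x_{i+1})<\delta$ for $0\le i<n$.
\item Put $y_0=x$ and $y_{n+1}=y$. Then for every $0\le i\le n$ we have $y_{i+1}\in F(x_i)$ and $d(x_i,y_i)<\delta$.
\item Since $x_i\in\underleftarrow{F}(y_{i+1})$, this gives $d(\underleftarrow{F}(y_{i+1}),y_i)<\delta$.
\end{itemize}
Hence $y_{n+1}=y,y_n,\dots,y_1,y_0=x$ is a $\delta$-chain of $\underleftarrow{F}$ from $y$ to $x$, one longer than the original chain.

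No semicontinuity is used in this step and $\delta$ is not shrunk. So chain transitivity transfers directly, and chain mixing transfers with threshold $N+1$, where $N$ is the threshold for $F$-chains from $x$ to $y'$. The converse again follows from $\underleftarrow{\underleftarrow{F}}=F$.
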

\begin{proof}
	{Based on $\underleftarrow{(\underleftarrow{F})}=F$, to prove this proposition, it is only necessary to prove necessity, that is, if $F$ is transitive (resp. {weakly mixing, mixing, chain transitive, chain mixing}) then $\underleftarrow{F}$ is transitive (resp. {weakly mixing, mixing, chain transitive, chain mixing}).}
	
	(1)Transitive: 
	
	{Let $U,V\subset X$ be two nonempty open sets. Since $F$ is transitive, there is $n\in\mathbb{N}$ such that $F^n(U)\bigcap V\neq\emptyset$. Then there is $x\in U$ such that one can find  $x_n\in F^n(x)\bigcap V$. Furthermore, there is $x\in\underleftarrow{F}^n(x_n)\bigcap U$ for some $x_n\in V$. That is, $\underleftarrow{F}^n(V)\bigcap U\neq\emptyset$. So, $\underleftarrow{F}$ is transitive.}
	
	(2)Weakly mixing:
	
	{Let $U,V\subset X$ be two nonempty open sets. Since $F$ is weakly mixing,  there is a strictly increasing subsequence $\{n_i\}$ of $\mathbb{N}$ such that for any $i\geq 1$ and any $0\leq j\leq i$, $F^{n_i+j}(U)\bigcap V\neq\emptyset$.
		Then,  $\underleftarrow{F}^{n_i+j}(V)\bigcap U\neq\emptyset$. So, $\underleftarrow{F}$ is weakly mixing.}
	
	(3)Mixing:
	Let $U,V\subset X$ be two nonempty open sets. Since $F$ is mixing, there is $M\in\mathbb{N}$ such that for any $n\geq M$, $F^{n}(U)\bigcap V\neq\emptyset$. Then, $\underleftarrow{F}^{n}(V)\bigcap U\neq\emptyset$. So, $\underleftarrow{F}$ is mixing.
	
	(4)Chain transitive:
	
	Let $x,y\in X$ and $\delta>0$. Since $\underleftarrow{F}$ is upper-continuous and $X$ is compact with the metric $d$, there is $\delta'>0$ such that for any $u\in X$, 
	\begin{equation}\label{ex1}
		\underleftarrow{F}(t)\subset B(\underleftarrow{F}(u),\delta), \forall t\in B(u,\delta').
	\end{equation}
	Since $F$ is chain transitive, there is a $\delta'$-chain $(x_i)^0_{i=n}$($n$ is a positive integer) of $F$ from $y$ to $x$ with $x_n=y$ and $x_0=x$. Then for any $0\leq i<n$, $d(F(x_{i+1}),x_{i})<\delta'$. That is, $F(x_{i+1})\bigcap B(x_i,\delta')\neq\emptyset$. Then $x_{i+1}\in\underleftarrow{F}(B(x_i,\delta'))$. By {Equation (\ref{ex1})}, $d(x_{i+1},\underleftarrow{F}(x_i))<\delta$. Then $(x_i)^n_{i=0}$ is a $\delta$-chain of $\underleftarrow{F}$ from $x$ to $y$. So, $\underleftarrow{F}$ is chain transitive.
	
	(5)Chain mixing:
	
	{Let $x,y\in X$ and $\delta>0$. Since $\underleftarrow{F}$ is upper-continuous and $X$ is compact with the metric $d$, there is $\delta'>0$ such that for any $u\in X$, $\underleftarrow{F}(t)\subset B(\underleftarrow{F}(u),\delta), \forall t\in B(u,\delta')$.
		Since $F$ is chain mixing, there is $N>0$ such that for any $n\geq N$ there exists a $\delta'$-chain $(x_i)^0_{i=n}$($n$ is a positive integer) of $F$ from $y$ to $x$. Then $(x_i)^n_{i=0}$ is a $\delta$-chain of $\underleftarrow{F}$ from $x$ to $y$. So, $\underleftarrow{F}$ is chain mixing.}
\end{proof}

\begin{theorem}\label{them4}
	Suppose that $X$ is a compact metric space with metric $d$ and $F:X\rightarrow 2^X$ is an upper semi-continuous set-valued map. Let $F(X)=X$. {If $\sigma$ is transitive(resp. weakly mixing, mixing) on $\underleftarrow{\lim} F$, then $F$ is transitive(resp. weakly mixing, mixing).}
\end{theorem}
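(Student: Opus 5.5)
The plan is to transfer the relation $N_\sigma(\cdot,\cdot)\neq\emptyset$ from the shift to $F$ through the zeroth coordinate projection $\pi_0:\underleftarrow{\lim} F\to X$, $\pi_0(x_0,x_1,\cdots)=x_0$. The key observation is that $\sigma$ runs \emph{backwards} along $F$: if $x=(x_0,x_1,\cdots)\in\underleftarrow{\lim}F$, then $\sigma^n(x)$ has zeroth coordinate $x_n$. Moreover $x_0\in F(x_1)$, $x_1\in F(x_2),\cdots,x_{n-1}\in F(x_n)$ gives $x_0\in F^n(x_n)$. This follows by a short induction from the definition $F^k(x)=\bigcup_{y\in F^{k-1}(x)}F(y)$, applied to the forward chain $x_n,x_{n-1},\cdots,x_0$. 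Hence $N_\sigma(\widetilde A,\widetilde B)\subset N_F(B,A)$ for cylinder sets, and the roles of the two open sets are swapped.

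First I would check that for every nonempty open $W\subset X$, the cylinder $\widetilde W:=\pi_0^{-1}(W)$ is a nonempty open subset of $\underleftarrow{\lim}F$. Openness holds because $\pi_0$ is continuous for $\rho$, since $d(x_0,y_0)\le\rho(x,y)$. For nonemptiness, pick $x_0\in W$. Because $F(X)=X$, there is $x_1$ with $x_0\in F(x_1)$, then $x_2$ with $x_1\in F(x_2)$, and so on. Recursively this produces a point of $\underleftarrow{\lim}F$ with zeroth coordinate in $W$. This is the only place surjectivity is used, and I expect it to be the step needing care, since otherwise $\underleftarrow{\lim}F$ or the cylinders could be empty.

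Next, given nonempty open $U,V\subset X$, I apply the hypothesis on $\sigma$ to the pair $(\widetilde V,\widetilde U)$, in that order. Whenever $\sigma^n(\widetilde V)\cap\widetilde U\neq\emptyset$, there is $x\in\underleftarrow{\lim}F$ with $x_0\in V$ and $x_n\in U$. By the observation above, $x_0\in F^n(x_n)\subset F^n(U)$, so $F^n(U)\cap V\neq\emptyset$. This proves
$$N_\sigma(\widetilde V,\widetilde U)\subset N_F(U,V).$$
Therefore the three cases follow at once:
\begin{itemize}
\item[(1)] Transitivity of $\sigma$ makes the left side nonempty, so $N_F(U,V)\neq\emptyset$ and $F$ is transitive.
\item[(2)] If the left side contains arbitrarily long runs of integers, so does $N_F(U,V)$; hence weak mixing passes to $F$.
\item[(3)] If the left side contains all $n\geq M$, so does $N_F(U,V)$; hence mixing passes to $F$.
\end{itemize}
Upper semi-continuity is not really needed beyond guaranteeing that $\underleftarrow{\lim}F$ is a closed, hence compact, subspace on which $\sigma$ acts.
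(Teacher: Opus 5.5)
Your proof is correct and follows essentially the same route as the paper: zeroth-coordinate cylinders in $\underleftarrow{\lim}F$ (nonempty by $F(X)=X$), and the observation that a point $x$ with $x_0\in V$, $x_n\in U$ gives $x_0\in F^n(x_n)$, so the roles of the two open sets are swapped. Packaging this as the single inclusion $N_\sigma(\widetilde V,\widetilde U)\subset N_F(U,V)$ is cleaner than the paper's case-by-case treatment. In particular, it makes explicit the interchange of $U$ and $V$, which the paper's write-up leaves somewhat loose.
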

\begin{proof}
	
	{(1)$\sigma$ is transitive on $\underleftarrow{\lim} F$ $\Rightarrow$ $F$ is transitive:}
	
	Let $U_0$, $V_0\subset X$ be two non-empty open sets. {By $F(X)=X$,} we can get two non-empty open sets {$\mathcal{U}=U_0\times X\times X\times\cdots$, $\mathcal{V}=V_0\times X\times X\times\cdots\subset\underleftarrow{\lim} F $.} By the transitivity of {$\sigma$ on $\underleftarrow{\lim} F$}, there exists $n\in\mathbb{N}$ such that $\sigma^n(\mathcal{U})\cap\mathcal{V}\neq\emptyset$. Then, one can find $(x_i)_{i=0}^\infty\in\mathcal{U}$ satisfying $(x_n,x_{n+1},\cdots)\in\mathcal{V}$, in which $x_0\in F^n(x_n)$. Construct an orbit $(y_i)_{i=0}^\infty$ {of} $F$ with $y_n=x_0$ and $y_0=x_n$. Then, $y_0\in V_0$ and $y_n\in U_0$. So, $F$ is transitive.
	
	{(2)$\sigma$ is weakly mixing on $\underleftarrow{\lim} F$ $\Rightarrow$ $F$ is weakly mixing:}
	
	Let $U_0$, $V_0\subset X$ be two non-empty open sets. {By $F(X)=X$,} we can get two non-empty open sets {$\mathcal{U}=U_0\times X\times X\times\cdots$, $\mathcal{V}=V_0\times X\times X\times\cdots\subset\underleftarrow{\lim} F $.} By the weakly mixing of {$\sigma$ on $\underleftarrow{\lim} F$}, there is a strictly increasing subsequence $\{n_m\}$ of $\mathbb{N}$ such that $$\bigcup_{m=1}^\infty\{n_m,n_m+1,\cdots,n_m+m\}\subset N(\mathcal{U},\mathcal{V}).$$ Let $n\in \bigcup_{m=1}^\infty\{n_m,n_m+1,\cdots,n_m+m\}$.Then one can find $(x_i)_{i=0}^\infty\in\mathcal{U}$ satisfying $(x_n,x_{n+1},\cdots)\in\mathcal{V}$, in which $x_0\in F^n(x_n)$. Construct an orbit $(y_i)_{i=0}^\infty$ for $F$ with $y_n=x_0$ and $y_0=x_n$. Then, $y_0\in V_0$ and $y_n\in U_0$, that is, $F^n(U)\bigcap V\neq\emptyset$. By $n\in \bigcup_{m=1}^\infty\{n_m,n_m+1,\cdots,n_m+m\}$, $F$ is weakly mixing.
	
	{(3)$\sigma$ is mixing on $\underleftarrow{\lim} F$ $\Rightarrow$ $F$ is mixing:}
	
	Let $U_0$, $V_0\subset X$ be two non-empty open sets. {By $F(X)=X$,} we can get two non-empty open sets {$\mathcal{U}=U_0\times X\times X\times\cdots$, $\mathcal{V}=V_0\times X\times X\times\cdots\subset\underleftarrow{\lim} F $.} By the mixing of {$\sigma$ on $\underleftarrow{\lim} F$}, there is a non-negative integer $M$ such that for any $n\geq M$, $\sigma^n(\mathcal{U})\cap\mathcal{V}\neq\emptyset$. Let $n\geq M$. Then one can find $(x_i)_{i=0}^\infty\in\mathcal{U}$ satisfying $(x_n,x_{n+1},\cdots)\in\mathcal{V}$, in which $x_0\in F^n(x_n)$. Construct an orbit $(y_i)_{i=0}^\infty$ for $F$ with $y_n=x_0$ and $y_0=x_n$. Then, $y_0\in V_0$ and $y_n\in U_0$, that is, $F^n(U)\bigcap V\neq\emptyset$. By $n\geq M$, $F$ is mixing.
	\end{proof}
	\begin{theorem}\label{them5}
Suppose that $X$ is a compact metric space with metric $d$ and $F:X\rightarrow 2^X$ is an upper semi-continuous set-valued map. Let $F(X)=X$. If $\sigma$ is chain mixing(resp. chain transitive) on $\underleftarrow{\lim} F$, then $F$ is chain mixing(resp. chain transitive).
\end{theorem}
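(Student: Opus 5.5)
The plan is to pass from chains of $\sigma$ on $\underleftarrow{\lim} F$ to chains of the inverse map $\underleftarrow{F}$ by projecting onto the $0$-th coordinate. Then I will use Proposition \ref{prop1} to move from $\underleftarrow{F}$ to $F$.

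First I record two elementary facts.
\begin{enumerate}
\item A point $(x_0,x_1,\cdots)\in\underleftarrow{\lim} F$ satisfies $x_i\in F(x_{i+1})$, that is, $x_{i+1}\in\underleftarrow{F}(x_i)$. So its coordinates form an orbit of $\underleftarrow{F}$.
\item Since $F(X)=X$, every $x\in X$ is the $0$-th coordinate of some point of $\underleftarrow{\lim} F$. Indeed, $\underleftarrow{F}(x)\neq\emptyset$ for every $x$, so we can choose $x_1\in\underleftarrow{F}(x)$, then $x_2\in\underleftarrow{F}(x_1)$, and so on.
\end{enumerate}
Also, by the definition of $\rho$, we have $d(a_0,b_0)\le\rho(a,b)$ and $d(a_1,b_0)\le\rho(\sigma(a),b)$.

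Now let $x,y\in X$ and $\delta>0$. Pick $\bar x,\bar y\in\underleftarrow{\lim} F$ with $\bar x_0=x$ and $\bar y_0=y$.

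Suppose $\sigma$ is chain transitive. Then there is a $\delta$-chain $\bar x=x^0,x^1,\cdots,x^n=\bar y$ of $\sigma$. For each $i<n$ we have $x^i_1\in\underleftarrow{F}(x^i_0)$ and
$$d(x^i_1,x^{i+1}_0)\le\rho(\sigma(x^i),x^{i+1})<\delta.$$
Hence $d(\underleftarrow{F}(x^i_0),x^{i+1}_0)<\delta$. Therefore $(x^i_0)_{i=0}^n$ is a $\delta$-chain of $\underleftarrow{F}$ from $x$ to $y$, and it has the same length as the $\sigma$-chain. So $\underleftarrow{F}$ is chain transitive.

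The chain mixing case is identical. Chain mixing of $\sigma$ supplies some $N$ such that for every $n\ge N$ there is a $\delta$-chain of $\sigma$ from $\bar x$ to $\bar y$ of length $n$. Projecting as above gives $\delta$-chains of $\underleftarrow{F}$ from $x$ to $y$ of every length $n\ge N$. So $\underleftarrow{F}$ is chain mixing.

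Finally, I apply Proposition \ref{prop1} to the map $\underleftarrow{F}$ in place of $F$. As noted in the proof of Corollary \ref{cor2}, $\underleftarrow{F}$ is upper semi-continuous, satisfies $\underleftarrow{F}(X)=X$, and has inverse $\underleftarrow{\underleftarrow{F}}=F$. Proposition \ref{prop1} then gives that $F$ is chain transitive (resp. chain mixing).

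The only delicate point is this last transfer. Reversing chains genuinely needs the uniform upper semi-continuity built into Proposition \ref{prop1}, because a $\delta$-chain of $\underleftarrow{F}$ reversed is not literally a $\delta$-chain of $F$. That is why I route through the proposition rather than reversing chains by hand. Surjectivity is used twice: for the nonemptiness of $\underleftarrow{F}$, which gives the lifts $\bar x,\bar y$, and in the hypotheses of Proposition \ref{prop1}.
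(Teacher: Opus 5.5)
Your proof is correct, but it is organized differently from the paper's.

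\textbf{The paper's route.} The paper never passes through $\underleftarrow{F}$. It takes a $\delta_1$-chain $z^0,\dots,z^n$ of $\sigma$ and combines $z^i_0\in F(z^i_1)$ and $d(z^{i+1}_0,z^i_1)<\delta_1$ with a semicontinuity estimate to get $d(F(z^{i+1}_0),z^i_0)<\delta/2$. It then lists the $0$-th coordinates backwards, obtaining directly a $\delta$-chain of $F$ from $y_0$ to $x_0$.

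\textbf{Your route.} You split this into two stages. The first is an exact projection: $x^i_1\in\underleftarrow{F}(x^i_0)$ holds exactly, so $\sigma$-chains project to $\underleftarrow{F}$-chains of the same length with no loss in $\delta$. The second is the reversal $\underleftarrow{F}\to F$, which you delegate to Proposition~\ref{prop1}. This shows that $\underleftarrow{F}$ is chain transitive (resp.\ chain mixing) with no semicontinuity at all, and that all the regularity is spent in the reversal. The price is that you depend on Proposition~\ref{prop1}.

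\textbf{A correction to your closing remark.} The uniform estimate is invoked both in the printed proof of Proposition~\ref{prop1}(4)--(5) and in the paper's proof of this theorem. In the latter it reads ``$d(u,x)<\delta_1$ implies $d(u',F(x))<\delta/2$ for all $u'\in F(u)$'', with $x$ ranging over the chain. This estimate is symmetric in $u$ and $x$, so it forces $F$ to be Hausdorff continuous. It fails for the u.s.c.\ map of Example~\ref{ex:usc}: take $u=u'=1$ and $x<1$ arbitrarily close to $1$, so that $d(u',F(x))=1$. So reversing chains does not ``genuinely need uniform upper semi-continuity''; that uniformity is simply not available for merely u.s.c.\ maps.

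\textbf{What actually makes it work.} The chain statements of Proposition~\ref{prop1}, and hence your proof, are nevertheless true, via the witness device of Theorem~\ref{thm:equiv}.
\begin{itemize}
\item Fix $p,q$ and $\delta$. Choose $\eta>0$ by upper semi-continuity of $F$ at the single point $p$, so that $F(a)\subset B(F(p),\delta/2)$ whenever $d(a,p)<\eta$. Let $\delta'=\min\{\eta,\delta/2\}$.
\item Take a $\delta'$-chain $y_0=q,y_1,\dots,y_n=p$ of $\underleftarrow{F}$. Pick witnesses $c_i$ with $y_i\in F(c_i)$ and $d(c_i,y_{i+1})<\delta'$.
\item Then $p,c_{n-2},\dots,c_0,q$ is a $\delta$-chain of $F$ of the same length $n+1$; for $n=1$ it is just $p,q$.
\item Every step is exact up to $\delta'$, except the first. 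The first step uses $y_{n-1}\in F(c_{n-1})$ and $d(c_{n-1},p)<\delta'\le\eta$, which give $d(F(p),y_{n-1})<\delta/2$.
\item Since $\delta'$ depends only on $p$ and $\delta$, the chain mixing case follows with the same lengths.
\end{itemize}
Inserting this argument in place of the citation makes your proof self-contained and fully justified for merely u.s.c.\ $F$.
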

\begin{proof}    

	(1)Chain transitive:
	
	Let $x_0, y_0$ be two points of $X$ and $\delta>0$. Then we can get two points $(x_i)_{i=0}^\infty$, $(y_i)_{i=0}^\infty\in\underleftarrow{\lim} F$ with $x_{i}\in F(x_{i+1})$ and $y_{i}\in F(y_{i+1})$ for all $i\in\mathbb{N}$. By the upper semi-continuity of $F$, there exists $0<\delta_1<\frac{\delta}{2}$ such that $d(u,x)<\delta_1$ implies $d(u', F(x))<\frac{\delta}{2}$, $\forall u'\in F(u)$. For $x$, $y\in\underleftarrow{\lim} F$ and $\delta_1>0$, by the chain transitivity of $\underleftarrow{\lim} F$, there is a $\delta_1$-chain $z^0=x$, $z^1,z^2,\cdots z^{n-1}$, $z^n=y$($n$ is a positive integer) from $x$ to $y$ with length $n+1$, in which $z^i=(z_j^i)_{j=0}^\infty$ and $z^i_j\in F(z^i_{j+1})$ for any $0\leq i\leq n$, $j\geq 0$. Then, for any $0\leq i\leq n-1$, $\rho(z^{i+1}, \sigma(z^i))<\delta_1$. Hence, for any $0\leq i\leq n-1$, $d(z^{i+1}_0,z^i_1)<\delta_1$. Then, $d(F(z^{i+1}_{0}),z^{i}_0)<\frac{\delta}{2}$, for any $0\leq i\leq n-1$.
	
	Put $u_0=z^n_0=y_0$, $u_1=z^{n-1}_0$, $u_2=z^{n-2}_0$, $\cdots$, $u_{n-1}=z^{1}_0$, $u_n=z^0_0=x_0$. 
	Then, $(u_i)_{i=0}^{n}$ is a $\delta$-chain of $F$ from $y_0$ to $x_0$ with length $n+1$. So, $F$ is chain transitive.
	
	(2)Chain mixing:
	
	Let $x_0, y_0$ be two points of $X$ and $\delta>0$. Then we can get two points $(x_i)_{i=0}^\infty$, $(y_i)_{i=0}^\infty\in\underleftarrow{\lim} F$ with $x_{i}\in F(x_{i+1})$ and $y_{i}\in F(y_{i+1})$ for all $i\in\mathbb{N}$. By the upper semi-continuity of $F$, there exists $0<\delta_1<\frac{\delta}{2}$ such that $d(u,x)<\delta_1$ implies $d(u', F(x))<\frac{\delta}{2}$, $\forall u'\in F(u)$. For $x$, $y\in\underleftarrow{\lim} F$ and $\delta_1>0$, by the chain mixing of $\underleftarrow{\lim} F$, there is a positive integer $N$ such that for any integer $n\geq N$, there is a $\delta_1$-chain $z^0=x$, $z^1,z^2,\cdots z^{n-1}$, $z^n=y$ from $x$ to $y$ with length $n+1$, in which $z^i=(z_j^i)_{j=0}^\infty$ and $z^i_j\in F(z^i_{j+1})$ for any $0\leq i\leq n$, $j\geq 0$. Then, for any $0\leq i\leq n-1$, $\rho(z^{i+1}, \sigma(z^i))<\delta_1$. Hence, for any $0\leq i\leq n-1$, $d(z^{i+1}_0,z^i_1)<\delta_1$. So, $d(F(z^{i+1}_{0}),z^{i}_0)<\frac{\delta}{2}$, for any $0\leq i\leq n-1$.
	
	Put $u_0=z^n_0=y_0$, $u_1=z^{n-1}_0$, $u_2=z^{n-2}_0$, $\cdots$, $u_{n-1}=z^{1}_0$, $u_n=z^0_0=x_0$. 
	Then, $(u_i)_{i=0}^{n}$ is a $\delta$-chain of $F$ from $y_0$ to $x_0$ with length $n+1$. So, $F$ is chain mixing.
	
	\end{proof}

	\section{Conclusions}\label{seccon}
	We study the properties of shadowing,  transitivity, weakly mixing, mixing, chain transitivity and chain mixing for the set-valued map and its inverse limit. We show that
	\begin{itemize}
\item[1] Let $F(X)=X$ and $\operatorname{diam}X=1$. Then $F$ has shadowing $\Leftrightarrow$ $\sigma$ has shadowing on $\underleftarrow{\lim} \underleftarrow{F}$.

\item[2] Let $F(X)=X$ and $\operatorname{diam}X=1$. Then $\underleftarrow{F}$ has shadowing $\Leftrightarrow$ $\sigma$ has shadowing on $\underleftarrow{\lim} F$.

\item[3] Let $F(X)=X$ and $\operatorname{diam}X=1$. Then $F$ has shadowing $\Leftrightarrow$ $\underleftarrow{F}$ has shadowing; consequently $F$, $\underleftarrow{F}$, $(\underleftarrow{\lim} \underleftarrow{F},\sigma)$ and $(\underleftarrow{\lim} F,\sigma)$ all have shadowing simultaneously. 
\item[4] Let $F$ have shadowing. Then $F$ is totally transitive $\Leftrightarrow$ $F$ is weakly mixing $\Leftrightarrow$ $F$ is mixing $\Leftrightarrow$ $F$ has specification property $\Leftrightarrow$ $F$ is chain mixing.
\item [5] Let $F(X)=X$. Then $F$ is transitive (resp. chain transitive, chain mixing, weakly mixing, mixing) $\Leftrightarrow$ $\underleftarrow{F}$ is transitive (resp. chain transitive, chain mixing, weakly mixing, mixing).

\item[6] Let $F(X)=X$. If $\sigma$ is transitive(resp. weakly mixing, mixing, chain transitive, chain mixing) on $\underleftarrow{\lim} F$, then $F$ is transitive(resp. weakly mixing, mixing, chain transitive, chain mixing).

\end{itemize}

Conclusion 1 strengthens the earlier one-sided implication (Theorem \ref{thm2}) to an equivalence, characterizing the shadowing of $F$ by that of the shift on $\underleftarrow{\lim} \underleftarrow{F}$, and Conclusion 2 is its dual for $\underleftarrow{F}$ and $\underleftarrow{\lim} F$. Conclusion 3 then closes the loop: the shadowing of $F$ and that of $\underleftarrow{F}$ are always equivalent, with no continuity or openness assumption, which strengthens the recent theorem of Khan, Kumar and Das \cite{KhanKumarDas} obtained under such assumptions (see Theorem \ref{thm:equiv} and Remark \ref{rem:strengthen}). The key point is that one reverses the witnesses of a pseudo-orbit rather than the pseudo-orbit itself, so that only the upper semi-continuity of $F$ is used; Example \ref{ex:usc} exhibits a genuinely discontinuous, non-open such map to which the result applies but the earlier theorem does not. Examples \ref{ex:nodrift} and \ref{ex:doubling} further illustrate the equivalence in its negative form and on a non-degenerate, dynamically rich map, while Remark \ref{rem:hypotheses} shows that surjectivity and upper semi-continuity are essential whereas continuity, openness and the diameter normalization are not.

The above conclusions generalize the corresponding results for single-valued continuous self-maps on a metric
space and their inverse limits. This not only helps us
to study the set-valued maps more deeply, but also makes it available to use the
relatively simple set-valued maps to understand the relatively complex generalized
inverse limits.

\end{document}